\newtheorem{theorem}{Theorem}
\newtheorem{lemma}[theorem]{Lemma}
\newtheorem{question}[theorem]{Question}
\newtheorem{problem}[theorem]{Problem}
\newtheorem{conjecture}[theorem]{Conjecture}
\theoremstyle{definition}
\newtheorem*{remark}{Remark}
\newtheorem*{acknowledgement}{Acknowledgements}
\title{On finite volume, negatively curved manifolds}
\author{T. T$\hat{\mathrm{a}}$m Nguy$\tilde{\hat{\mathrm{e}}}$n Phan}
\address{Department of Mathematics\\
5734 S. University Ave.\\
Chicago, IL 60637}
\email{ttamnp@math.uchicago.edu}
\DeclareMathOperator{\MinVol}{MinVol}
\DeclareMathOperator{\Vol}{Vol}
\DeclareMathOperator{\Ker}{Ker}
\DeclareMathOperator{\vol}{Vol}
\DeclareMathOperator{\CAT}{CAT}
\def\R{\mathbb{R}}
\def\Z{\mathbb{Z}}
\def\N{\mathbb{N}}
\begin{document}
\begin{abstract}
We study noncompact, complete, finite volume, negatively curved manifolds $M$. We construct $M$ with infinitely generated fundamental groups in all dimensions $n \geq 2$. We construct $M$ whose cusp cross sections are compact hyperbolic manifolds in all dimensions $n\geq 3$. We construct nonuniform, negatively curved lattices that do not contain any parabolic isometries. We show that there are $M$, for each dimension $n \geq 3$, such that $\widetilde{M}$ does not satisfy the visibility axiom. We give a condition on the curvature growth versus the volume decay that guarantees topological finiteness. We raise a few questions on finite volume, negatively curved manifolds.
\end{abstract}
\maketitle

\section{Introduction}
Let $M$ be a noncompact, complete, finite volume manifold with sectional curvature $K(M)<0$. A fundamental theme in the study of nonpositively curved manifolds is the relation between curvature and topology. We concentrate on the following questions. 
\begin{itemize}
\item[1)] When is $M$ \emph{tame}, i.e. $M$ is the interior of some compact manifold $\overline{M}$ with boundary?
\item[2)] If $M$ is tame, which diffeomorphism types of manifolds can occur as $\partial \overline{M}$?
\item[3)] If $M$ is tame, what is the relationship between $K(M)$ and the topology of the inclusion map $i \colon \partial\overline{M} \longrightarrow \overline{M}$? For example, is the induced map  $i_*$ on the fundamental groups injective? Can $i_*$ be surjective?
\item[4)] Is each end of $M$ quasi-isometric to a ray? (See \cite{Eberleinlattices} for the definition of \emph{ends}.)
\end{itemize}
When $-1<K(M)<0$, some answers are known.
\begin{itemize}
\item If $-1< K(M) < 0$, then $M$ is tame by results of Gromov (\cite{Gromovneg}).

\item Eberlein proved that if $-1\leq K(M) \leq 0$ and the universal cover $\widetilde{M}$ is a visibility manifold (see the other end of the introduction), then $M$ is tame, and each end of $M$ is a \emph{parabolic end}, i.e. the quotient of a horoball of $\widetilde{M}$ by a group of parabolic isometries with compact cross section (\cite{Eberleinlattices}). This implies that each end of $M$ is quasi-isometric to a ray. Eberlein conjectured that if $M$ has no embedded half flats, then visibility of $\widetilde{M}$ follows (\cite[p. 438]{Eberleinlattices}). We will point out that this is not the case at the other end of the introduction.

\item Tameness is not guaranteed when $K(M) \rightarrow -\infty$. Eberlein constructed an example of an untame, surface with unbounded negative curvature (\cite{Eberleinlattices}).  
\end{itemize}

The goal of this paper is to exhibit new phenomena when the curvature conditions are relaxed, and to raise new questions about finite volume, negatively curved manifolds. 

\bigskip
\noindent
\textbf{Tameness.} We investigate which curvature conditions force tameness. Unless stated otherwise, all manifolds in this paper are connected, noncompact, complete and finite volume. We begin with untame examples. In contrast to Gromov's theorem on tameness, we have the following. %Firstly, we discuss the number of ends of such manifolds. %We have discussed negatively curved manifolds of finite topological type. Now we address the question of topological finiteness of negatively curved manifolds (with finite volume) with various bounds of the curvature.

\begin{theorem}[Examples with infinitely many ends]\label{inftyend}
For each dimension $n \geq 2$, there is a complete, finite volume, negatively curved $n$-manifold $M$ with infinitely many ends. In particular, $M$ is not the interior of a compact manifold with boundary. Moreover, one can give $M$ a complete, finite volume metric with sectional curvature $K(M) < -1$.
\end{theorem}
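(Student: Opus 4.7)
The plan is to build $M$ as an infinite chain of successively rescaled copies of a single finite-volume negatively curved building block, connected by warped-product ``necks,'' arranged so that the chain has infinite total length (making $M$ complete), the total volume is finite, and each building block contributes a cusp (yielding infinitely many ends of $M$).

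For the building block I would take a finite-volume hyperbolic $n$-manifold $P$ with two totally geodesic boundary components and at least one cusp: in dimension $2$ this is any hyperbolic three-holed sphere with two geodesic boundaries and one cusp, while in dimension $n\geq 3$ such a $P$ can be cut out of an arithmetic hyperbolic manifold along totally geodesic hypersurfaces (via Millson-type constructions). I would then fix a sequence of scales $\lambda_i\in(0,1)$ with $\sum\lambda_i = \infty$ but $\sum\lambda_i^n < \infty$ (for instance $\lambda_i = 1/(i+1)$), and let $P_i$ denote $P$ with its metric rescaled by $\lambda_i^2$, so that $\vol(P_i) = \lambda_i^n \vol(P)$ and $K(P_i) \equiv -1/\lambda_i^2 < -1$. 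I would glue $P_i$ to $P_{i+1}$ along their prescribed boundary components through an intermediate warped-product neck $N_i$ of the form $T\times[0,L_i]$ with metric $dr^2 + f_i(r)^2 g_T$, where $T$ is a flat $(n-1)$-torus topologically matching the boundary. Choosing $f_i$ convex with $f_i''/f_i > 1$ everywhere gives sectional curvatures $-f_i''/f_i < -1$ in radial $2$-planes and $-(f_i'/f_i)^2 < 0$ in tangential ones; matching the jets of $f_i$ at the endpoints to the Fermi-coordinate expansions of the adjacent rescaled hyperbolic metrics makes $M$ smooth across the gluings. Retaining the cusp of each $P_i$ produces infinitely many cusp ends, while the ``forward'' direction along the chain sits at infinite distance provided $\sum \lambda_i L_i = \infty$ (with $L_i$ bounded below).

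The main obstacle is this last step: for each $i$, constructing a neck with (a) $K < -1$ throughout, (b) smooth $C^\infty$ matching at both boundaries to the differently-scaled adjacent hyperbolic pieces, and (c) volume small enough that $\sum \vol(N_i) < \infty$. This is essentially a one-variable problem — a prescribed-jet interpolation for $f_i$ subject to the differential inequality $f_i'' > f_i$ with positivity $f_i > 0$ — which can be resolved by patching together pieces of the form $A\cosh(k(r-r_0))$ for suitable $k > 1$, $A > 0$, and bounding $L_i$ so the volume contribution is arbitrarily small. Once the necks are built, one verifies completeness from $\sum \lambda_i L_i = \infty$, finite total volume from $\lambda_i^n\vol(P) + \vol(N_i)$ being summable, and infinitely many ends from each cusp of each $P_i$ being a distinct end of $M$. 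The strict bound $K(M) < -1$ required by the final clause of the theorem is built into the construction from the outset, via $\lambda_i<1$ on each $P_i$ and $f_i'' > f_i$ on each $N_i$, so no separate rescaling argument is needed.
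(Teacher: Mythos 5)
Your overall architecture (rescaled copies of a fixed block chained together with convex warped necks, completeness from $\sum\lambda_i=\infty$, finite volume from $\sum\lambda_i^n<\infty$) is sound and close in spirit to the paper's, but it rests on an existence claim you have not established and which is the crux of the matter: a finite-volume hyperbolic $n$-manifold $P$ with a cusp \emph{and} two \emph{compact} totally geodesic boundary components, for every $n\geq 3$. Millson's construction produces \emph{compact} arithmetic hyperbolic manifolds with compact totally geodesic hypersurfaces; it does not produce cusped ones. In the standard non-compact arithmetic examples the totally geodesic hypersurfaces arising from rational subforms are themselves non-compact once the subform has at least five variables (Meyer's theorem forces isotropy), so cutting along them yields boundary components that are noncompact and run into the cusps, which breaks your neck construction. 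The paper avoids ever needing such a $P$: it starts from a compact hyperbolic manifold with a compact totally geodesic hypersurface, cuts along it, and \emph{manufactures} finite-volume ends by deforming the metric near the cut to a warped product $dt^2+f(t)^2g_{hyp}$ with $f$ convex and rapidly decaying (the construction of Theorem \ref{hyperbolic cusp}); the infinitely many ends then come from assembling truncated copies of these blocks along an infinite $3$-valent tree, not from genuine hyperbolic cusps hanging off a linear chain. If you want to keep your linear-chain-with-hanging-cusps picture, you should likewise manufacture the hanging cusp from a third compact totally geodesic boundary component of a compact block, rather than positing a cusped hyperbolic $P$.

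The neck is also wrong as written for $n\geq 3$. The hypersurfaces you glue along are totally geodesic in a hyperbolic $n$-manifold, hence closed \emph{hyperbolic} $(n-1)$-manifolds, not flat tori (a closed hyperbolic manifold carries no flat metric), so a neck of the form $dr^2+f_i(r)^2 g_T$ with $T$ flat cannot even match induced boundary metrics, let alone jets. With the correct hyperbolic fiber the tangential curvatures are $-(1+f'(r)^2)/f(r)^2$ rather than $-(f'/f)^2$; this matters, because at the totally geodesic hypersurface the rescaled hyperbolic metric has $f(r)=\lambda\cosh(r/\lambda)$ with $f'(0)=0$, so a flat-fiber formula would give tangential curvature $0$ at the junction and $K<-1$ would fail there, whereas the hyperbolic-fiber formula gives $-1/\lambda^2<-1$. (These are exactly the curvature computations in Section \ref{hypcusp}.) Two smaller points: a chain indexed by $\N$ with $\lambda_i=1/(i+1)$ leaves the unglued boundary component of the first block dangling, so index by $\Z$ or close off the first block; and after fixing the fiber you must recheck that your interpolating $f_i$ keeps $1+(f_i')^2>f_i^2$ throughout the neck so that the tangential curvatures stay below $-1$.
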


Eberlein constructed a two-ended surface with untame ends (\cite{Eberleinlattices}). We give a construction of such manifolds in all dimension $n > 2$. 
\begin{theorem}[Examples with untame ends in all dimension]\label{strictneg}
For each dimension $n > 2$, there is a complete, two-ended, finite volume, negatively curved manifold $M$ whose fundamental group is not finitely generated. There are such $M$ whose ends are not quasi-isometric to rays. Moreover, one can give $M$ complete, finite volume metrics with sectional curvature $K(M) < -1$.
\end{theorem}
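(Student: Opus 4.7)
The idea is to extend Eberlein's $2$-dimensional construction to all $n>2$ by starting with a tame, finite-volume, two-cusped, negatively curved base $n$-manifold $M_0$ and performing an infinite sequence of small negatively-curved surgeries in the two cusps, accumulating to infinity. Each surgery will contribute a free factor to $\pi_1$ and an essential loop escaping down the cusp, and these accumulations are what prevent both finite generation of $\pi_1$ and quasi-isometry of the ends to rays.

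For the base, take a finite-volume hyperbolic $n$-manifold $M_0$ with exactly two cusps; such manifolds exist for every $n\geq 3$ (for instance, by standard arithmetic constructions, or by gluing a two-cusped piece). Each cusp is modelled on a warped product $N_j\times[0,\infty)$ with flat cross sections and warping $e^{-t}$. Fix divergent sequences of heights $T_i\to\infty$ in each cusp, and for each $i$ pick a tiny coordinate ball $B_i$ centred at a point at height $T_i$, of radius $r_i\ll e^{-T_i}$. Excise $B_i$ from $M_0$ and glue in a rescaled copy $\epsilon_i P$ of a fixed compact negatively curved manifold-with-boundary $P$ whose boundary is a standard round sphere and whose fundamental group is nontrivial (for example, the complement in a closed hyperbolic $n$-manifold of a small convex ball). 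Choose the scalings $\epsilon_i$ so that $\sum_i \Vol(\epsilon_i P)<\infty$; this yields a complete, finite-volume, two-ended manifold $M$.

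The main technical obstacle is carrying out the infinitely many surgeries while keeping the sectional curvature strictly negative everywhere. This is handled by a standard smoothing lemma (as used, e.g., in Gromov--Thurston branched-cover constructions or in Farrell--Jones hyperbolisation): if $\partial B_i$ and $\partial(\epsilon_i P)$ carry matching induced metrics and compatible second fundamental forms, one can interpolate the two warping profiles in a narrow collar by a convex combination and preserve $K<0$. The exponential decay of the cusp gives plenty of room to match sphere sizes by choosing $\epsilon_i$ small. To sharpen the conclusion to $K<-1$, either rescale the final metric globally or begin with a warped cusp model of the form $dt^2+e^{-2\lambda t}g_{N_j}$ with $\lambda>1$ (still with finite volume).

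To finish, repeated application of Seifert--van Kampen gives $\pi_1(M)\cong\pi_1(M_0)\ast\pi_1(P_1)\ast\pi_1(P_2)\ast\cdots$, which is not finitely generated since each factor $\pi_1(P_i)$ is nontrivial. For each end, there exist essential loops $\gamma_i$ (coming from the $P_i$) of length $O(\epsilon_i)\to 0$ lying arbitrarily far down the cusp, so the local topology of the end does not stabilise; this rules out quasi-isometry to a ray, which must have bounded local homotopy complexity at infinity. The surgeries can be placed to leave the two-endedness of $M_0$ intact, completing the proof.
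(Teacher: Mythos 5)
Your construction has a fatal topological flaw in dimension $n\geq 3$: excising a ball from $M_0$ and gluing in $P=(\text{closed hyperbolic }n\text{-manifold})\setminus(\text{ball})$ is exactly forming a connected sum, so your $M$ is an infinite connected sum of aspherical manifolds with nontrivial fundamental groups. For $n\geq 3$ such a manifold is never aspherical (the universal cover is an infinite connected sum of simply connected pieces and has nontrivial $H_{n-1}$ carried by the lifts of the separating spheres), whereas every complete negatively curved manifold is aspherical by Cartan--Hadamard. So no choice of $\epsilon_i$, $T_i$, or smoothing collar can produce $K<0$; the obstruction is not a matter of matching warping profiles. Concretely, the geometric symptom is that the gluing sphere would have to be convex as seen from the cusp side (the boundary of a small metric ball) and simultaneously convex as seen from inside $P$ (the boundary of a convex ball removed from a hyperbolic manifold), and two locally convex sides cannot be interpolated keeping $K<0$. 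This is precisely why the paper glues along \emph{aspherical} codimension-one hypersurfaces --- compact totally geodesic hyperbolic $(n-1)$-manifolds obtained by cutting a closed hyperbolic $n$-manifold along a non-separating totally geodesic $N^{n-1}$ --- so that $M$ is a graph of aspherical spaces with $\pi_1$-injective edge spaces and remains aspherical; the free-product-with-sphere decomposition you get from Seifert--van Kampen is itself incompatible with negative curvature in these dimensions.

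The argument for the second assertion is also unsound: quasi-isometry is a coarse metric notion and does not detect ``local homotopy complexity at infinity.'' An end carrying infinitely many metrically tiny essential loops can perfectly well be quasi-isometric to a ray (indeed, the paper's basic two-ended example, a bi-infinite chain of blocks, has infinitely generated $\pi_1$ in each end without any claim that the end fails to be QI to a ray). To defeat quasi-isometry to a ray one needs a genuinely different coarse shape; the paper achieves this with a different underlying graph (a line with additional edges $e_{m,-m}$) and a careful choice of cusp truncation lengths satisfying $l(A)+l(B)<l(C)<2(l(A)+l(B))$, so that the end contains pairs of points at comparable distance from the basepoint whose distance within the end grows linearly. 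You would need to replace your loop-counting argument with something of this kind.
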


\bigskip
\noindent
\textbf{Curvature growth and volume decay.} The examples that we give for Theorem \ref{strictneg} have the property that ``the growth of the curvature is about the same as the rate of decay of the volume". In trying to prevent such a  construction, we discovered the following phenomenon. 

Fix $p \in M$. For $r > 0$, we denote by $b_p(r)$ the supremum of the absolute value of the sectional curvature of all tangent planes at points in the ball $B_p(r)$. For each $r > 1$, let $A_p(r)$ be the annulus  $B_p\left(r\right) \setminus B_p\left(r - 1\right)$.
\begin{theorem}[Curvature growth - Volume decay]\label{CGVD}
Let $M$ be a complete, finite volume, negatively curved Riemannian manifold of dimension $n$. Suppose that 
%\[\lim_{r\rightarrow \infty} b(r)^n\Vol\left(B_p\left(r+\dfrac{\mu(n)}{b(r)}\right) \setminus B_p\left(r - \dfrac{\mu(n)}{b(r)}\right)\right)^2 = 0.\]
\[\lim_{r\rightarrow \infty} b_p(r)^n\Vol(A_p(r))^2 = 0.\]
for some $p \in M$. Then $M$ is tame. %Moreover, each end of $M$ is the quotient of a horoball $\widetilde{U}$ of $\widetilde{M}$ by a group of parabolic deck transformations fixing the center of $\widetilde{U}$. 
\end{theorem}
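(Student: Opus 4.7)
The plan is to show that the hypothesis forces each sufficiently distant annulus $A_p(r)$ to lie inside the Margulis thin part, measured at the local curvature scale $b_p(r)^{-1/2}$, and to use the classical structure theory of the thin part in negatively curved manifolds to conclude that $M$ has only finitely many ends, each diffeomorphic to a product $N \times [0,\infty)$.

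First I would establish the key geometric estimate: $\text{inj}(x)\sqrt{b_p(r)} \to 0$ uniformly for $x \in A_p(r)$ as $r \to \infty$. The hypothesis is equivalent to $b_p(r)^{n/2}\Vol(A_p(r)) \to 0$. For $\text{inj}(x) \leq 1/2$, the embedded ball $B(x,\text{inj}(x))$ fits inside the thickened annulus $B_p(r+1)\setminus B_p(r-2)$, and Bishop's volume comparison in nonpositive curvature gives $c_n\,\text{inj}(x)^n \leq \Vol\bigl(B_p(r+1)\setminus B_p(r-2)\bigr)$. Combining this with the hypothesis---perhaps after adaptively choosing annular widths of size $\sim b_p(r)^{-1/2}$ rather than the fixed width $1$, to handle possible rapid growth of $b_p$---yields the desired uniform decay.

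Next I would rescale the metric by $\sqrt{b_p(r)}$ on a neighborhood of $A_p(r)$. In the rescaled metric the sectional curvatures lie in $[-1,0]$ and the injectivity radius on $A_p(r)$ is uniformly small. The Margulis Lemma for bounded nonpositive curvature (Gromov, Ballmann--Gromov--Schroeder) then shows that the subgroup of $\pi_1(M,x)$ generated by loops at $x$ of rescaled length less than the dimensional Margulis constant $\epsilon_n$ is virtually nilpotent. In strict negative curvature such a subgroup is either virtually infinite cyclic generated by a hyperbolic isometry (yielding a compact Margulis tube around a closed geodesic) or parabolic (yielding a cusp of the form $N\times[0,\infty)$). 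For $r$ large, every noncompact component of the thin part meeting $A_p(r)$ is therefore a parabolic cusp. The Margulis-constant lower bound on each cusp's ``mouth'' volume, together with $\Vol(M)<\infty$, rules out infinitely many cusps, so $M$ has finitely many ends, each of product form, and is tame.

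The main obstacle lies in the rescaling argument. The classical Margulis Lemma is stated for manifolds of globally bounded curvature, whereas here one needs a localized version that requires the curvature bound only on the rescaled ball where the short loops live. The underlying Jacobi-field comparison proof does carry over, but one must verify that the resulting subgroup is a virtually nilpotent subgroup of the global $\pi_1(M)$ rather than of some local fundamental group. A secondary subtlety arises if $b_p$ grows very rapidly: then unit-width annuli are not the right objects for the injectivity radius estimate, and one must instead use annular widths of size $\sim b_p(r)^{-1/2}$ together with a coarea-based volume estimate. Finally, bounding the total number of parabolic ends requires careful tracking of how each cusp's definite rescaled volume contribution appears in the fixed total $\Vol(M)$ across varying curvature scales.
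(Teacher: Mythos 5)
Your first half is essentially the paper's argument: the hypothesis is used exactly as you say, via $b_p(r)^{n/2}\Vol(A_p(r))\to 0$ and a Euclidean-comparison lower bound $\Vol(B(x,\mathrm{inj}(x)))\geq c_n\,\mathrm{inj}(x)^n$, to force $\mathrm{inj}(x)<\mu_1/\sqrt{b_p(r)}$ far out, and the paper likewise invokes a localized Margulis lemma (its ``Local Little Loop Lemma'') whose hypothesis is a curvature bound only on a ball of radius comparable to the local Margulis constant. So the rescaling concern you flag as the ``main obstacle'' is handled and is not where the difficulty lies.

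The genuine gap is in your second half, at two points. First, you assert that ``every noncompact component of the thin part meeting $A_p(r)$ is a parabolic cusp'' without ruling out that the short loop at a far-away point is generated by a \emph{hyperbolic} isometry; a priori a short closed geodesic could sit arbitrarily far from $p$. The paper proves this is impossible (Lemma \ref{no small hyperbolic}): the type of $\Gamma_q$ and, in the hyperbolic case, the fixed pair at infinity are locally constant on each component of $M\setminus B_p(R)$, so a hyperbolic generator would have to realize small displacement at points arbitrarily far from its axis, contradicting that the displacement function of an axial isometry in negative curvature diverges away from the axis. Second, even granting parabolicity, the conclusion that each thin component is a product $N\times[0,\infty)$ and that there are finitely many of them is exactly the structure theory that is \emph{not} available off the shelf when the curvature is unbounded --- indeed Theorems \ref{inftyend} and \ref{strictneg} of this paper exhibit finite-volume negatively curved manifolds with infinitely many ends and with untame ends, so finite volume alone cannot ``rule out infinitely many cusps,'' and your per-cusp volume lower bound degenerates because the Margulis constant $\mu_1/\sqrt{b_p(r)}$ can tend to $0$. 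The paper closes both gaps at once with a different mechanism: once all short loops outside $B_p(R)$ are parabolic, the displacement function $\delta_{\pi_1(M)}(x)=\min_{\gamma\neq 1}d(x,\gamma x)$ has no critical points outside $B_p(R)$ (a parabolic realizing the minimum always gives a direction of increase toward its fixed point at infinity), a sublevel set $\delta^{-1}(\alpha)$ with $\alpha$ small is a compact hypersurface contained in a ball, and Morse theory for min-type functions yields tameness directly. You would need to supply an argument of this kind --- or some substitute for the bounded-curvature thick--thin decomposition --- to complete your proof.
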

We remark that for $n = 2$ there is an immediate proof of Theorem \ref{CGVD} using the Gauss-Bonnet theorem and negatively curved geometry. The limit condition in Theorem \ref{CGVD} is necessary. The following theorem is motivated by a question of Benson Farb: Is topological finiteness guaranteed if the growth of the curvature is slow enough?  
\begin{theorem}\label{growth}
Let $f \colon (1,\infty) \longrightarrow \R^+$ be any non-decreasing function. Then there exists a complete, finite volume, negatively curved manifold $M$ with infinitely generated fundamental group with a point $p \in M$ such that $b_p(r) < f(r)$ for all $r \in (1, \infty )$.
\end{theorem}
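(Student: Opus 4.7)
The approach is to carry out the construction of Theorem \ref{inftyend} with all geometric parameters---the depths of the gluings and the internal scales of the inserted pieces---chosen as functions of the prescribed bound $f$.

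Recall that the construction of Theorem \ref{inftyend} builds $M$ from a finite-volume, negatively curved base manifold $N_0$ (containing $p$ and having at least one cusp) by inductively modifying the cusp: at each of a sequence of depths $d_1<d_2<\cdots\to\infty$, we insert a topology-altering piece (a pair-of-pants region for surfaces, or an analogous handle for $n\geq 3$) that adds a generator to $\pi_1$ or a new end. Each insertion uses a fixed model piece whose metric can be rescaled by a factor $\lambda_i>0$: rescaling multiplies lengths by $\lambda_i$, volumes by $\lambda_i^n$, and sectional curvatures by $\lambda_i^{-2}$. Between the cusp of $N_0$ and the rescaled piece one interpolates with a short, smooth ``widening collar'', which is itself a warped product $dt^2+\phi(t)^2 g_0$ bridging the cusp cross-section to the boundary of the rescaled piece.

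Given a non-decreasing $f$, the parameters are chosen as follows. Pick $\lambda_i$ so that after rescaling the $i$-th piece has $|K|<f(d_i)/2$, and choose the warping profile $\phi$ of the corresponding widening collar so that $|\phi''/\phi|$ and $(\phi'/\phi)^2$ are also below $f(d_i)/2$; the latter is achievable by taking the collar long enough relative to $1/\sqrt{f(d_i)}$. A matching rescaling of the base metric around $p$ ensures $|K|<\lim_{r\to 1^+}f(r)$ on $B_p(1)$. Since $f$ is non-decreasing and the $i$-th insertion lies at distance $\geq d_i$ from $p$, we obtain $b_p(r)<f(r)$ for every $r>1$. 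To make $\vol(M)$ finite, choose the depths $d_i$ to grow fast enough in $i$ that $\sum_i(\vol(\text{piece}_i)+\vol(\text{collar}_i))$ converges; this is possible because the $(n-1)$-volume of the cusp cross-section at depth $d_i$ decays exponentially, dominating the polynomial scaling $\lambda_i^n$.

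The principal obstacle is verifying that the widening collars and rescaled pieces can simultaneously satisfy (a) the curvature bound $|K|<f(d_i)$, (b) a summable volume bound, and (c) the matching conditions at both ends of each gluing. The curvature bound reduces, via the warped-product formulae, to bounds on $\phi''/\phi$ and $\phi'/\phi$, which are easy to achieve by a slow, monotone interpolation, at the cost of making the collar long; the collar volume $\int\phi^{n-1}dt$ is then controlled by the larger endpoint cross-section (determined by $\lambda_i$), and is made summable by a sufficiently rapid choice of $d_i$. Once this is arranged, the infinite generation of $\pi_1(M)$ is inherited from the construction of Theorem \ref{inftyend}, and $M$ has all the desired properties.
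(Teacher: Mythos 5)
Your overall strategy is the same as the paper's: reuse the block-and-neck construction, rescale the $i$-th piece so its curvature is below $f(d_i)$, and push it out to distance $d_i$ by lengthening the intervening neck, with the $d_i$ chosen sparse enough to keep the total volume finite. Before the main issue, note that both the statement and any such proof implicitly require $f(r)\to\infty$: if $f$ is bounded, then $|K|$ is bounded on all of $M$, so after a constant rescaling $-1<K<0$ and $M$ is tame by Gromov's theorem quoted in the introduction, contradicting infinite generation of $\pi_1$; correspondingly your $\lambda_i$ would be bounded away from $0$ and $\sum_i \lambda_i^n\vol(\text{model})$ would diverge.

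The genuine gap is in your curvature estimate for the collars and necks. For a warped product $dt^2+\phi(t)^2g_0$, the sectional curvature of a plane tangent to the cross-section is $(\kappa_{g_0}-\phi'^2)/\phi^2$, not $-(\phi'/\phi)^2$: you have dropped the intrinsic curvature $\kappa_{g_0}$ of the cross-section. In the construction of Theorem \ref{inftyend} (and of Theorem \ref{strictneg}, which is what the paper actually reuses here), the cross-sections are compact \emph{hyperbolic} $(n-1)$-manifolds $N$, so this term is $-1/\phi^2$ and cannot be tamed by taking the collar long; it forces $\phi > f(r)^{-1/2}$ at every point at distance $r$ from $p$. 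Every slice of every neck then has volume at least $\vol(N)\,f(r)^{-(n-1)/2}$, and since the necks sweep out all but a set of radii of finite total length, $\vol(M)\geq c\int^\infty f(r)^{-(n-1)/2}\,dr$, which diverges for, say, $f(r)=\log r$ in every dimension. So with these blocks the theorem is unreachable for slowly growing $f$, no matter how $\lambda_i$, $d_i$, or the collar profiles are chosen; the necessary repair is to use pieces whose cusp cross-sections are \emph{flat} (e.g.\ truncated finite-volume cusped hyperbolic manifolds with several cusps), for which $\kappa_{g_0}=0$ and a neck can collapse exponentially while $|K|$ really is controlled by $(\phi'/\phi)^2$ and $\phi''/\phi$ — exactly the two quantities you bound. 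Separately, your volume count is a non sequitur: the rescaled piece has volume $\lambda_i^n\vol(\text{model})$, full stop; it is not multiplied by the cusp cross-sectional area at depth $d_i$, so ``exponential decay of the cross-section dominating the polynomial scaling $\lambda_i^n$'' does not establish summability. What actually makes the volume finite is (i) $\lambda_i\to 0$, available only because $f\to\infty$, and (ii) the fact that a flat-cross-section neck of \emph{arbitrary} length has volume bounded by a constant times the $(n-1)$-st power of its larger end. For what it is worth, the paper's own proof of this theorem is a two-sentence sketch (``truncate the cusp of $X_i$ far enough'') that, taken literally with its hyperbolic cross-sections and warping $e^{-t}$, runs into the same obstruction; so the missing idea here is not one you could have read off from the intended argument, but it is missing nonetheless.
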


\bigskip
\noindent
\textbf{Cusp cross sections of tame manifolds.} Now we discuss the class of tame, negatively curved manifolds. Let $M^n$ be the interior of a compact, smooth manifold $\overline{M}$ with boundary. Suppose that $M$ has a negatively curved Riemannian metric with finite volume. We can remove a compact subset $A$ of $M$ such that  $(M-A)$ is diffeomorphic to $\partial\overline{M} \times (0,\infty)$. By compactness of $M$, the set $(M-A)$ has finitely many components. Each component of $(M-A)$ is diffeomorphic to $C \times (0,\infty)$, for some compact manifold $C^{n-1}$ diffeomorphic to a boundary component of $\overline{M}$. We call such connected components of $(M-A)$ \emph{cusps} of $M$. For each cusp $C\times (0, \infty)$, we say that $C$ is the \emph{cusp cross section}.

\begin{problem}
Classify all closed $(n-1)$-manifolds that can occur as cusp cross sections of a tame, finite volume, negatively curved manifold.
\end{problem} 

If $K(M)$ is \emph{pinched}, i.e. $-1 < K(M) < -a^2 $ for some $0 < a <1$, then the cusp cross sections of $M$ are compact infra-nilmanifolds (\cite{BuserKarcher}, \cite{Eberleinlattices}). So, in some sense, they are small relative to $M$. There are manifolds $M$ with $-1 <K(M) < 0$ and $K(M) \rightarrow 0$ that do not admit a pinched negatively curved metric. Fujiwara (\cite{Fujiwara}) constructed manifolds $M$ with $-1\leq K(M) <0$ with cusp cross sections that are circle bundles over a compact hyperbolic manifold. Belegradek (\cite{Belegradekch}) constructed manifolds $M$ with $K(M)<-1$  whose cusp cross section are circle bundles over a compact complex hyperbolic manifold. Abresch and Schroeder (\cite{AS}) constructed negatively curved manifolds $M$ with $-1\leq K(M)<0$with cusp cross section a generalized graph manifold. 

%As a start to classifying cusp cross sections, we find a topological restriction. For example, a basic invariant of manifolds  is the simplicial volume. For a manifold $N$, we denote by $||N||$ the simplicial volume of $N$, also called the ``Gromov norm" of $N$. (See \cite{Gromovbounded} for the definition of simplicial volume.)

If $-1<K(M)<0$, then the cusp cross sections have zero Euler characteristic by a theorem of Cheeger and Gromov (\cite[Theorem 1.2]{CGL2}). They also have zero simplicial volume by results of Gromov (\cite[p. 17]{Gromovbounded}). We will also explain this in Section \ref{sec:0simpvol}. 

In this sense, the cusp cross sections when $-1 <K(M) < 0$ are small compared to the manifold. One can ask whether the same thing holds if we only restrict to negative curvature $K(M) <0$ and impose no other bounds on the curvature. For example, can the cusp cross section have non-zero simplicial volume or Euler characteristic? Can the fundamental group of a cusp cross section be $\delta$-hyperbolic? We give a positive answer to these questions, and thus give a sharp contrast with the restriction on the simplicial volume and Euler characteristic of the cusp cross section when $-1 <K(M) < 0$. For a manifold $N$, we denote by $||N||$ the simplicial volume of $N$ (also called the ``Gromov norm" of $N$). 

\begin{theorem}[Examples of cusps with non-zero simplicial volume]\label{hyperbolic cusp}
For each dimension $n \geq 3$, there are complete, finite volume, negatively curved manifolds $M^n$ whose cusp cross sections are compact hyperbolic manifolds $C^{n-1}$. In particular, $||C|| > 0$ for all $n \geq 3$, and $\chi (C) \ne 0$ for odd $n$.
\end{theorem}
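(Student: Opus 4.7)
The plan is to construct $M$ by gluing a compact hyperbolic ``cap'' to a model cusp, patched smoothly via a warped-product interpolation. For the \emph{model cusp}, let $(C, h_C)$ be a closed hyperbolic $(n-1)$-manifold and consider the warped metric $g = dr^2 + e^{-2r} h_C$ on $C \times \R$. The standard warped-product formulas give radial (mixed) sectional curvature $-1$ and tangential sectional curvature $-1 - e^{2r}$, so $K(g) \leq -1$; and $\Vol(C \times [0,\infty), g) = \Vol(C)\int_0^\infty e^{-(n-1)r}\,dr < \infty$, so the end $r \to +\infty$ is a finite-volume cusp whose cross sections are hyperbolic copies of $C$.

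For the \emph{hyperbolic cap}, choose $C$ to appear as a closed totally geodesic codimension-one submanifold of some closed hyperbolic $n$-manifold $N$; such pairs $(N, C)$ exist for every $n \ge 3$ by the arithmetic constructions of Millson, using quadratic forms of signature $(n,1)$ that restrict to $(n-1,1)$ on a rational subspace. Cutting $N$ along $C$ yields a compact $n$-manifold $N_0$ with $\partial N_0$ a disjoint union of copies of $C$, whose hyperbolic metric takes Fermi form $d\rho^2 + \cosh^2(\rho)\,h_C$ in a tubular neighborhood of $\partial N_0$. By passing if necessary to a sufficiently deep finite cover of $N$, this tubular neighborhood may be taken to contain a collar $C \times [0, R]$ with $R > 1$; set $V = N_0 \setminus (C \times [0,R))$.

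The main step is the \emph{warped-product interpolation}. On $C \times [-R, \infty)$ take the warped metric $dt^2 + \phi(t)^2 h_C$ with $\phi$ smooth, positive, strictly convex, and equal to $\cosh(t)$ for $t \le -\delta$ and to $e^{-t}$ for $t \ge \delta$, for some $\delta \in (0, R)$. For such a $\phi$ to exist smoothly and strictly convexly, $\phi'$ must be strictly increasing from $-\sinh(\delta)$ at $-\delta$ to $-e^{-\delta}$ at $\delta$, which requires $\sinh \delta > e^{-\delta}$, i.e.\ $\delta > \tfrac12 \ln 3$; and the integral $\int_{-\delta}^\delta \phi'\,dt = \phi(\delta) - \phi(-\delta) = -\sinh \delta$ must lie in the open interval $(-2\delta\sinh\delta,\,-2\delta e^{-\delta})$ of attainable integrals, which reduces to $\delta > \tfrac12$ and $e^{2\delta} > 1 + 4\delta$, both holding at $\delta = 1$. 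A concrete $\phi$ is obtained by prescribing a smooth positive $\phi''$ on $[-\delta, \delta]$ with the correct moments by bump-function techniques. Gluing this warped piece to $V$ along $\{t = -R\}$---where the induced metrics and second fundamental forms of the two sides match isometrically---produces $M$: a smooth Riemannian manifold, complete and of finite volume since each piece is, and negatively curved throughout by Bishop--O'Neill. The end $t \to \infty$ is then a cusp with cross section $(C, h_C)$.

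The consequences are immediate: Gromov's positivity of simplicial volume for closed hyperbolic manifolds gives $\|C\| > 0$, and for odd $n$ the dimension $n-1$ is even, so Chern--Gauss--Bonnet applied to $(C, h_C)$ yields $\chi(C) = c_{n-1}\Vol(C) \ne 0$. The principal technical point of the construction is the existence of the strictly convex interpolating warping $\phi$, which reduces to the elementary endpoint and integral inequalities above.
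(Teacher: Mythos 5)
Your construction is correct and is essentially the paper's: cut a closed hyperbolic $n$-manifold along a closed, embedded, totally geodesic hypersurface $C$ (Millson's arithmetic examples) and replace the Fermi warping $\cosh(t)$ by a positive, convex function decaying fast enough that $\int \phi^{n-1}\,dt<\infty$, the warped-product curvature formulas $-\phi''/\phi$ and $-(1+\phi'^2)/\phi^2$ giving $K<0$. The only real divergence is that by insisting the warping equal $e^{-t}$ exactly you force the collar half-width to exceed $\tfrac12\ln 3$, which you patch with an asserted but unproved passage to a deep finite cover (justifiable here by separability, and harmless since the cross section is then still a closed hyperbolic manifold), whereas the paper sidesteps this by allowing any convex $f>0$ matched to $\cosh$ only to second order at the collar's edge with $t^3f(t)\to 0$, which works in arbitrarily thin collars.
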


We see that Theorem \ref{hyperbolic cusp}, together with results of Fujiwara (\cite{Fujiwara}) and Abresch and Schroeder (\cite{AS}) imply the following theorem. 
\begin{theorem}\label{differentclasses}
The following three classes of noncompact, complete, finite volume, negatively curved manifolds are distinct. 
\begin{itemize}
\item[1.] $-1< K < -a^2$ for some $0 <a <1$,
\item[2.] $-1<K<0$, %, and $K \rightarrow 0$,
\item[3.] $K <-1$, %, and $K \rightarrow -\infty$,
\end{itemize}   
\end{theorem}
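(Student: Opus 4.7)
The plan is to show the three classes are pairwise distinct by producing separating examples, relying on two standard structural results: the Buser--Karcher/Eberlein theorem that pinched negative curvature forces cusp cross sections to be infra-nilmanifolds, and the Cheeger--Gromov--Gromov fact, recalled in the excerpt, that cusp cross sections of manifolds admitting $-1 < K < 0$ have zero simplicial volume. I would begin by recording the trivial inclusions: class 1 $\subseteq$ class 2 by definition, and class 1 $\subseteq$ class 3 by rescaling, since if $-1 < K_g < -a^2$ then $K_{cg} = K_g/c < -a^2/c$, which is $< -1$ once $0 < c < a^2$. So the task reduces to proving each of these inclusions is strict and that class 2 and class 3 do not coincide.

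For class 2 $\not\subseteq$ class 1, I would invoke Fujiwara's manifolds (or Abresch--Schroeder's), which lie in class 2 and have cusp cross sections that are circle bundles over closed hyperbolic manifolds (respectively generalized graph manifolds). These cross sections have non-virtually-nilpotent fundamental groups, so they are not infra-nilmanifolds, and the Buser--Karcher/Eberlein theorem rules the ambient $M$ out of class 1.

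For the remaining two separations I would invoke Theorem \ref{hyperbolic cusp}: for each $n \geq 3$ it produces a negatively curved, finite volume $M^n$ whose cusp cross section $C^{n-1}$ is a closed hyperbolic manifold of dimension $\geq 2$. By Gromov, $\|C\| > 0$, so Cheeger--Gromov--Gromov forbids $M \in$ class 2, and Buser--Karcher/Eberlein forbids $M \in$ class 1 (as $C$ is not an infra-nilmanifold). Arranging $M$ to lie in class 3 --- either directly from the construction or by a rescaling once one checks $\sup K(M) < 0$ --- then settles class 3 $\not\subseteq$ class 1 and class 3 $\not\subseteq$ class 2 simultaneously, and together with the previous step gives the pairwise distinctness.

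The main obstacle I anticipate is exactly this last point: the statement of Theorem \ref{hyperbolic cusp} only promises ``negatively curved,'' so one must return to the construction to verify $\sup K < 0$, after which rescaling pushes the sectional curvature below $-1$. If the construction does not already yield a strictly negative upper bound, an auxiliary modification to the metric near the cusps is needed to place the examples in class 3 before the argument above goes through.
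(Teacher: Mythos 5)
Your proposal is correct and follows essentially the same route as the paper: Fujiwara/Abresch--Schroeder examples separate class 2 from class 1 via the infra-nilmanifold structure of pinched cusps, and the Theorem \ref{hyperbolic cusp} examples (whose cusp cross sections are closed hyperbolic manifolds with $\|C\|>0$) separate class 3 from both classes 1 and 2 via Theorem \ref{0simpvol}. The one caveat you flag --- verifying $\sup K<-1$ for the Theorem \ref{hyperbolic cusp} examples --- is handled in the paper by choosing the warping function $f$ to decay exponentially, as noted in the remarks following that construction.
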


\begin{question}\label{question: K->0}
Is there a noncompact, finite volume, negatively curved manifold $M$ that does not admit any metric with $K(M) < -1$? That is, any negatively curved metric on $M$ must have $K(M) \rightarrow 0$, and $K(M) \rightarrow -\infty$.  More generally, what can guarantee $K(M) \rightarrow 0$?
\end{question}

%If there is a negatively curved manifold $M$ with a cusp with cross section $C$ such that the map $i_* \colon \pi_1(C) \longrightarrow \pi_1(M)$ is not injective, then $\widetilde{M}$ is not a visibility manifold by results of Eberlein (\cite{Eberleinlattices}). This will imply that $K(M) \rightarrow 0$. (See the last subsection of the introduction for more discussion on visibility manifolds.)

\bigskip
\noindent
\textbf{Injectivity and surjectivity of $\pi_1(C) \longrightarrow \pi_1(M)$.}  
Let $M$ be a tame, negatively curved manifold, and let $C$ be a cusp cross section of $M$. If the curvature of $M$ is pinched, i.e. $-1< K(M) <-a^2 <0$, then the structure of $M$ is similar to that of real rank one locally symmetric manifolds (e.g. hyperbolic manifolds) in that:
\begin{itemize}
\item[a)] Each end of $M$ is the quotient of a horoball in $\widetilde{M}$ by a group of parabolic isometries fixing the same point at infinity.  
\item[b)] $\pi_1(C)$ is a proper subgroup of $\pi_1(M)$.
\item[c)] There is a loop $\gamma$ in $M$ that is \emph{stuck}, i.e. there is some compact set $A$ such that $\gamma$ cannot be homotoped to lie in the complement of $A$.
\item[d)] $M$ has finitely many ends.
\end{itemize}

On the other hand, in the class of finite volume, nonpositively curved manifolds with bounded curvature, there are examples for which none of the above properties hold. For examples, the product of two noncompact surfaces does not satisfy the first three of above properties. There are finite volume, nonpositively curved manifolds $M$ with bounded curvature that violate  property (d), i.e. $M$ has infinitely many ends. We will give an example of such manifolds in Section \ref{sec: infinite ends}. 

Therefore it is interesting to investigate which of the above properties of tame, negatively curved manifolds remain when one relaxes the curvature conditions, i.e. whether the above topological properties come from negativity of the curvature or the pinching of the curvature. As in our discussion on topological finiteness, there are negatively curved manifolds that do not satisfy (d). Buyalo contructed $4$-dimensional manifolds $M$ with $-1 <K(M) <0$ for which $i_*\colon\pi_1(C) \longrightarrow \pi_1(M)$ is not injective. So these manifolds are counterexamples to (b). More surprisingly, if we do not require the curvature to be bounded, the first three properties fail for the following examples. 

\begin{theorem}[Examples of manifolds with peripheral $\pi_1$]\label{peripheral}
Let $\Sigma_k$ be a compact surface of genus $k \geq 2$. There is a complete, finite volume, negatively curved Riemannian metric $g$ on the manifold $X = \Sigma_k\times\R$.
\end{theorem}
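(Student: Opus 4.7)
The plan is to assemble $g$ from two warped-product cusps at the ends of the $\mathbb{R}$ factor and interpolate across a compact central slab using a non-warped metric. Fix a hyperbolic metric $g_0$ of curvature $-1$ on $\Sigma_k$ and, for $T$ large, set $g_+ = dt^2 + e^{-2t}g_0$ on $\Sigma_k\times[T,\infty)$ and $g_- = dt^2 + e^{2t}g_0$ on $\Sigma_k\times(-\infty,-T]$. The standard warped-product formulas give sectional curvature $-f''/f=-1$ in any plane containing $\partial_t$ and $-(1+(f')^2)/f^2\le -1$ in each tangential plane, and the volume of each cusp is $\tfrac12\Vol(\Sigma_k,g_0)\,e^{-2T}<\infty$.

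The main obstacle is extending $g_\pm$ smoothly to a negatively curved metric on the slab $\Sigma_k\times[-T,T]$. A pure warped product $dt^2+\phi(t)^2 g_0$ cannot do this: negative radial sectional curvature forces $\phi''>0$ on $[-T,T]$, making $\phi'$ strictly increasing, but matching the cusps requires $\phi'(-T)=e^{-T}>0$ and $\phi'(T)=-e^{-T}<0$, a contradiction. Equivalently, the umbilic shape operator $S_t=(\phi'/\phi)I$ of the level sets would have to pass from $+I$ through $0$ to $-I$, violating the Riccati inequality $\dot S_t+S_t^2>0$ that characterizes negative radial curvature.

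To circumvent this I would take the slab metric in the more general form $g=dt^2+g_t$, with $g_t$ a smooth family of metrics on $\Sigma_k$ that is not a conformal rescaling of any fixed metric, making the slice shape operators $S_t$ non-umbilic. By the Gauss--Codazzi equations, the tangential sectional curvature of each slice is $K_{g_t}+\det S_t$ and the radial sectional curvatures are the eigenvalues of $-(\dot S_t+S_t^2)$. Deforming $g_0$ conformally within the slab so that $K_{g_t}$ becomes very negative ensures the tangential curvatures remain negative regardless of the sign of $\det S_t$, while splitting the two eigenvalues of $S_t$ so they avoid simultaneous vanishing --- by rotating the eigenframe of $S_t$ as $t$ evolves --- keeps $\dot S_t+S_t^2$ positive definite through the transition. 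The hardest step is coordinating these two adjustments so that both inequalities hold simultaneously and the interpolation smoothly matches the warped cusp data at $t=\pm T$; I would handle this by an explicit bump-function construction, verifying by direct computation that the sectional curvatures of $g$ are negative throughout, that $g$ is smooth, complete, and has finite volume.
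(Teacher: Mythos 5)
Your construction cannot be completed: the interpolation across the slab, which you correctly single out as the crux, is not merely hard but impossible, and rotating the eigenframe of $S_t$ does not rescue it. Since your metric has the global form $g=dt^2+g_t$ on $\Sigma_k\times\R$, the curves $t\mapsto (x,t)$ are unit-speed geodesics orthogonal to every slice, and along each one the shape operator $S(t)$ of the slices satisfies the matrix Riccati equation $\dot S+S^2=-R_t$, where $R_t$ is the radial curvature operator; negative radial curvature says exactly that $\dot S+S^2$ is positive definite. This condition propagates positive semidefiniteness of $S$ forward in $t$, independently of any rotation of the eigenframe. Indeed, let $J$ solve $\ddot J+R_tJ=0$ in a parallel frame with $J(-T)=I$ and $\dot J(-T)=S(-T)=I$, so that $S=\dot JJ^{-1}$ while $J$ is invertible. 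For any fixed $w\neq 0$, the function $h(t)=|J(t)w|^2$ satisfies $h''=2|\dot Jw|^2-2\langle R_tJw,Jw\rangle\geq 0$ and $h'(-T)=2|w|^2>0$, so $h'>0$ for all $t\geq -T$; hence $J(t)$ remains invertible and $\langle S(t)J(t)w,J(t)w\rangle=\tfrac12 h'(t)>0$ for every $w$, i.e.\ $S(t)\succ 0$ for all $t\geq -T$. This is incompatible with $S(T)=-I$, which your cusp $dt^2+e^{-2t}g_0$ forces at $t=T$ (it even shows the slice areas are nondecreasing for $t\geq -T$, contradicting finite volume of the forward end). Your eigenvalue-splitting observation only controls the diagonal of $\dot S+S^2$ in the moving eigenframe; positive definiteness of the whole operator still forces each eigenvalue to satisfy the scalar inequality $\dot\lambda+\lambda^2>0$ wherever the spectrum is simple, and, more robustly, forces $S\succeq 0$ forward in time as above. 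Making $K_{g_t}$ very negative only helps with the tangential curvatures and is irrelevant to this radial obstruction.

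The paper evades this trap by never putting the metric in the form $dt^2+g_t$. It takes a closed hyperbolic $3$-manifold fibering over the circle with fiber $\Sigma_k$ and pseudo-Anosov monodromy (Thurston), passes to the infinite cyclic cover $X\cong\Sigma_k\times\R$ with the lifted periodic, infinite-volume, negatively curved metric, and then rescales that metric by a slowly decaying step-like function, roughly $1/|k|$ on the $k$-th fundamental domain (Theorem \ref{cycliccover}): $\sum 1/k=\infty$ gives completeness, $\sum 1/k^{3}<\infty$ gives finite volume, and the smallness of the derivatives of the rescaling keeps the curvature negative. The pseudo-Anosov twisting is exactly what prevents the fibers from forming an equidistant foliation, which is what your Riccati obstruction would otherwise forbid; correspondingly, the ends of the paper's example are not warped-product cusps at all (by Theorem \ref{semisimplelattice}, $\pi_1(X)$ acts with no parabolics). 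If you want to salvage a hands-on construction, you must abandon the ansatz that the $\R$-coordinate is a global distance function.
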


Theorem \ref{peripheral} is based on an idea of Ursula Hamenst\"adt communicated to me by Grigori Avramidi, who asked a related question: does a handlebody admit a complete, finite volume, negatively curved metric? Hamenst\"adt suggested the infinite cyclic cover of a noncompact $3$-manifold fibering over a circle (which is topologically a handlebody) might have such a metric. I have tried this unsuccessfully, and can only make it work for the case of the cyclic cover of a compact hyperbolic manifold fibering over the circle.   

One might expect that for any noncompact, finite volume, negatively curved manifold $M$, the group $\pi_1(M)$ of deck transformations on the universal cover $\widetilde{M}$ should contain some parabolic isometries, as in the case of locally symmetric spaces. However, a surprising property of the following examples is that the entire fundamental group $\pi_1(M)$ acts by semisimple isometries. To the best of our knowledge, this is the first example of a noncompact, tame, complete, finite volume manifold $M$ with $K(M) <0$ that has this property. 

\begin{theorem}[Examples of $\pi_1(M)$ with no parabolic isometries] \label{semisimplelattice}
Let $\Sigma_k$ be a compact surface of genus $k \geq 2$. There is a Riemannian metric $g$ on the manifold $X = \Sigma_k\times\R$ that is complete, negatively curved, the volume $\Vol(X,g)$ is finite, and $\pi_1(X)$ contains only semisimple isometries of $\widetilde{X}$.  
\end{theorem}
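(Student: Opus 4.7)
The plan is to refine the metric of Theorem~\ref{peripheral} so that the central surface $S := \Sigma_k\times\{0\}$ is a compact, totally geodesic, negatively curved submanifold of $X$. Once this is in place, the required semisimplicity is extracted from the action of $\pi_1(X)=\pi_1(S)$ on the lift $\widetilde{S}\subset\widetilde{X}$.

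\textbf{Constructing the metric.} I would revisit the construction of Theorem~\ref{peripheral} starting from a Fuchsian hyperbolic structure on $\Sigma_k\times\R$---the infinite-volume quotient of $\H^3$ by a Fuchsian embedding of $\pi_1(\Sigma_k)$---in which Fermi coordinates around the totally geodesic surface $\H^2/\pi_1(\Sigma_k)$ give the metric the form $dt^2 + \cosh^2(t)\, g_{\mathrm{hyp}}$. The goal is to alter this metric far outside a neighborhood of $S$, in the two ``wings,'' so as to reduce the volume to a finite quantity while keeping sectional curvature strictly negative, and without disturbing the metric near $S$. Since the modification is supported away from $S$, the central surface retains its hyperbolic structure and remains compact and totally geodesic in the new metric $g$; completeness and finite volume are ensured by the decay of $g$ in the wings.

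\textbf{Deducing semisimplicity.} Let $p\colon \widetilde{X}\to X$ be the universal covering and set $\widetilde{S}=p^{-1}(S)$. Because the inclusion $S\hookrightarrow X$ induces an isomorphism on $\pi_1$, $\widetilde{S}$ is connected and is the universal cover of $S$; because $S$ is totally geodesic, $\widetilde{S}$ is a complete, simply connected, totally geodesic surface in $\widetilde{X}$ on which $\pi_1(X)=\pi_1(\Sigma_k)$ acts freely, properly discontinuously, and cocompactly by isometries. By compactness of $S$, the curvature of $\widetilde{S}$ is bounded above by a negative constant $-a^2$, so $\widetilde{S}$ is $\CAT(-a^2)$. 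By the standard fact that every element of a group acting cocompactly by isometries on a $\CAT(0)$ space is semisimple (and that every nontrivial element of a free action is axial), each $\gamma\in\pi_1(\Sigma_k)\setminus\{1\}$ translates some geodesic line $\ell_\gamma\subset\widetilde{S}$. Since $\widetilde{S}$ is totally geodesic in $\widetilde{X}$, $\ell_\gamma$ is also a geodesic of $\widetilde{X}$ translated by $\gamma$ with the same translation length; thus $\gamma$ is an axial, in particular semisimple, isometry of $\widetilde{X}$.

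\textbf{Main obstacle.} The delicate step is the construction. Achieving strict negative curvature, finite volume, and a compact totally geodesic central hypersurface simultaneously is impossible via a naive warped product $dt^2+f(t)^2 g_{\mathrm{hyp}}$: a totally geodesic slice at $t=0$ forces $f'(0)=0$, strict negativity of the mixed sectional curvature forces $f''>0$, and these two conditions together force $f$ to be non-decreasing on $[0,\infty)$, which is incompatible with $\int_0^\infty f^2\,dt<\infty$. A non-warped-product deformation of the Fuchsian metric in the wings, with careful control of the sectional curvatures (and a smooth matching at the interface with the Fermi neighborhood of $S$), is therefore needed; this is the technical heart of the proof.
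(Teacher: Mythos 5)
Your second step is sound: if $\gamma$ translates a geodesic line of $\widetilde{X}$ by $c>0$ then $\inf_x d(x,\gamma x)=c$ is attained (compare $d(x,\gamma^n x)\le n\,d(x,\gamma x)$ with $d(x,\gamma^n x)\ge nc-2d(x,\ell_\gamma)$), so producing an axis inside a totally geodesic $\widetilde{S}$ would indeed finish the proof. The problem is that the metric you need in the first step does not exist, and the obstacle you flag as ``technical'' is in fact a genuine obstruction. Suppose $S\subset X$ is a compact totally geodesic hypersurface with $\pi_1(S)\to\pi_1(X)$ an isomorphism and $K(X)<0$. Then $\widetilde{S}$ is a connected, complete, totally geodesic, convex hypersurface of the Hadamard manifold $\widetilde{X}$, invariant under all deck transformations, and the normal exponential map $\widetilde{S}\times\R\to\widetilde{X}$ is an equivariant diffeomorphism (normal geodesics are minimizing because $t\mapsto d(\sigma(t),\widetilde{S})$ is convex, vanishes at $0$, and has derivative $1$ there). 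Writing the metric in Fermi coordinates, the shape operator $A(t)$ of the equidistant hypersurfaces satisfies the Riccati equation $A'+A^2+R_t=0$ with $A(0)=0$; since $R_t\le 0$, comparison with the flat case gives $A(t)\ge 0$ for $t\ge 0$ (and symmetrically for $t\le 0$). Hence the Jacobian of the normal flow is non-decreasing in $|t|$, so $\Vol(S_t)\ge\Vol(S)>0$ for every $t$ and
\[
\Vol(X)=\int_{-\infty}^{\infty}\Vol(S_t)\,dt=\infty .
\]
So the incompatibility you discovered for warped products ($f'(0)=0$, $f''>0$ versus $\int f^2<\infty$) is not an artifact of the warped-product ansatz; no ``non-warped-product deformation in the wings'' can keep $S$ compact and totally geodesic while making the volume finite. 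Any proof along these lines must give up the totally geodesic compact core.

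The paper's mechanism is different and avoids this. It starts from the $\Z$-cover of a closed hyperbolic $3$-manifold fibering over the circle with fiber $\Sigma_k$ (so $X=\Sigma_k\times\R$ topologically and every nontrivial deck transformation is a \emph{loxodromic} isometry of $\H^3$ with an axis), and then rescales the hyperbolic metric on the $k$-th block by a factor comparable to $1/k$. For a loxodromic $\gamma$ the hyperbolic displacement grows exponentially in the distance $\rho$ from the axis of $\gamma$, while the rescaling only shrinks lengths polynomially in $\rho$, so the displacement of $\gamma$ in the new metric still tends to infinity outside a compact set; its infimum is therefore positive and attained, i.e.\ $\gamma$ is axial. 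In other words, the paper trades your global convex core for quantitative control of each displacement function, which is compatible with finite volume. If you want to salvage your write-up, this is the estimate to prove; the totally geodesic surface has to go.
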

We summarize the discussion so far in the following table. 

\bigskip
\begin{center}
\begin{tabular}{|r|r|r|r|r|}
\hline
\textbf{Curvature} & \textbf{Cusp cross section}  & \textbf{Inj. of $\pi_1(C)$} & \textbf{No stuck loops} & \textbf{Para. ends}  \\
\hline
$-1<K<-a^2$ 	& is a compact & True (\cite{Eberleinlattices})   	& True	     	& True \\
				& infra-nil manifold.	&   & & \\

\hline
$-1< K <0$ 		& $\chi (C) = 0$,  $||C|| = 0$  
	& False (\cite{Buyalo})		& ? 			& ?\\
\hline
$K<-1$ 			& can be a compact & ? 		& False 			& False \\
				& hyperbolic manifold, &  	&(see Theorem \ref{peripheral}) 	& \\
				& $||C||$ can be non-zero & & & \\
\hline
\end{tabular}
\newline
\end{center}  
\bigskip
\noindent

There is a related conjecture of Farb about nonpositively curved manifolds with geometric rank one. A nonpositively curved manifold has \emph{geometric rank one} if there is a geodesic in its universal cover that does not bound an infinitesimal half flat.

\begin{conjecture}[Farb]
Let $M$ be a tame, complete, finite volume manifold that has geometric rank one. Suppose that $-1 \leq K(M) \leq 0$. Then there is a nontrivial loop in $M$ that cannot be homotoped to leave every compact set.  
\end{conjecture}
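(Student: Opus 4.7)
The plan is to produce an explicit stuck loop by finding a rank one closed geodesic in $M$. First I would invoke a rank one closing lemma in the finite volume setting, in the spirit of Ballmann--Brin--Eberlein--Spatzier, to promote the hypothesized rank one geodesic in $\widetilde{M}$ to a rank one axial element $\phi \in \pi_1(M)$ whose axis $\gamma \subset \widetilde{M}$ does not bound an infinitesimal half flat. In the compact case this is standard; in the finite volume case with $-1 \leq K(M) \leq 0$, it requires enough recurrence of the geodesic flow on the unit tangent bundle, which follows from finiteness of Liouville measure together with the bounded curvature hypothesis. Let $\bar\gamma \subset M$ denote the projection of $\gamma$, a closed geodesic representing the conjugacy class of $\phi$.

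Second, I would argue that $\bar\gamma$ is the desired stuck loop. Suppose it could be freely homotoped out of every compact set. Then a subsequence of representatives would lie in a single cusp end $E \cong C \times [T,\infty)$ of $\overline{M}$. Lifting this free homotopy equivariantly, the terminal curve $\gamma' \subset \widetilde{M}$ is setwise $\phi$-invariant and sits in a lift of $E$, hence inside a horoball-like neighborhood $B_\xi$ centered at some point $\xi$ in the boundary at infinity of $\widetilde{M}$. Invariance forces $\phi(B_\xi) = B_\xi$, so $\phi$ fixes $\xi$. Since a rank one axial isometry has exactly two fixed points at infinity, namely the endpoints of its axis, $\xi$ must be one of these two endpoints.

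The main obstacle, and the crux of the conjecture, is to rule out that an endpoint of $\gamma$ coincides with a point fixed by a peripheral subgroup of $\pi_1(M)$, equivalently that the axis $\gamma$ enters a cusp-stabilizer end. In pinched negative curvature this is automatic because peripheral subgroups are virtually nilpotent and consist of parabolic isometries; under only $-1 \leq K \leq 0$, peripheral subgroups may mix parabolic and semisimple isometries and the cusp geometry is not necessarily infra-nil. The approach I would try is a Jacobi field argument: given a sequence $\{\phi_n\}$ in the peripheral group $\Gamma_\xi$ moving points along a horosphere based at $\xi$, the translates $\phi_n \cdot \gamma$ should converge after rescaling to a perpendicular parallel Jacobi field along $\gamma$, contradicting rank one. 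Making this precise requires uniform geometric control on horospheres of $\widetilde M$, available because $-1 \leq K \leq 0$ bounds the second fundamental form of horospheres, together with a careful application of the Arzel\`a--Ascoli type convergence used by Eberlein for visibility manifolds.

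A secondary approach, avoiding the rank one closing lemma altogether, would be purely topological: if no loop in $M$ is stuck, then every element of $\pi_1(M)$ is conjugate into some peripheral subgroup, so $\pi_1(M)$ is generated by its peripheral subgroups. One would then aim to derive a contradiction with geometric rank one via a Bass--Serre or $\CAT(0)$ splitting argument, showing that such a group cannot act cocompactly on a neighborhood of a rank one geodesic. I expect the first strategy to be the more direct of the two once the horosphere analysis is carried out, and the horosphere analysis is precisely where I expect the real work to lie.
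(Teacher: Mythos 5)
This statement is not a theorem of the paper: it is an open conjecture (attributed to Farb) that the paper records without proof, so there is no argument of the author's to compare yours against. What you have written is a research plan rather than a proof, and you say as much yourself: you identify ``the main obstacle, and the crux of the conjecture'' and then describe an approach you ``would try,'' concluding that the horosphere analysis ``is precisely where I expect the real work to lie.'' That unresolved step --- ruling out that the rank one axis has an endpoint fixed by a peripheral subgroup, via the proposed rescaled Jacobi field limit --- is exactly the content of the conjecture, so the proposal cannot be accepted as a proof.

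Beyond the acknowledged gap, two earlier steps also need justification in this generality. First, the rank one closing lemma: Ballmann's periodic approximation of rank one geodesics is proved under the duality condition (or compactness); for a finite volume manifold with $-1 \leq K(M) \leq 0$ the duality condition does hold, but you should cite or prove this rather than appeal loosely to ``enough recurrence.'' Second, and more seriously, your claim that a loop pushed out a cusp end lands in ``a horoball-like neighborhood $B_\xi$ centered at some point $\xi$'' at infinity presupposes that the ends of $M$ are parabolic ends. One of the recurring themes of this paper is that this fails without visibility or pinching: Eberlein's theorem identifying ends with horoball quotients requires $\widetilde{M}$ to be a visibility manifold, and the paper exhibits finite volume negatively curved manifolds whose ends are not of this form. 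Under only $-1 \leq K(M) \leq 0$ an end need not determine a single fixed point at infinity, so the conclusion that $\phi$ fixes such a $\xi$ does not follow. Your secondary, purely topological strategy is likewise only a direction: showing that a rank one group generated by peripheral subgroups leads to a contradiction is not something a Bass--Serre argument gives for free in the $\CAT(0)$ setting.
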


\bigskip
\noindent
\textbf{Visibility.} A simply connected, complete, nonpositively curved manifold $X$ is a \emph{visibility} manifold (or is said to satisfy the visibility axiom) if any pair of distinct points of the boundary at infinity of $\partial_\infty X$ can be connected by a geodesic. If $X$ has strictly negative curvature, i.e. $K(X) < -a^2 <0$, then $X$ is a visibility manifold (\cite{Eberlein}). In general, a simply connected, nonpositively curved manifold can not be a visibility manifold if it contains half flats. If $K(M) <0$, then $\widetilde{M}$ does not contain any flat strips, let alone half flats. However, negative curvature does not imply visibility. Though this is surely known to the experts, we could not find a reference with such a concrete counterexample. We give such a concrete example in Section \ref{sec: invisibility}. 

If $M$ is a compact negatively curved manifold, then the universal cover $\widetilde{M}$ is a visibility manifold since $K(M)$ is pinched. Eberlein conjectured (\cite{Eberleinlattices}) that if $M$ is a noncompact, finite volume, manifold with $-1\leq K \leq 0$, and $\widetilde{M}$ contains no half flats, then $\widetilde{M}$ is a visibility manifold. He also proved that this is true if $M$ is a non-flat surface (\cite[Proposition 2.5]{Eberleinsurface}). While it is true that if $M$ has finite volume and $K(M) <0$ (which implies that $\widetilde{M}$ does not contain any half flats), then the set of end points of geodesics is dense in $\partial_\infty \widetilde{M}\times \partial_\infty \widetilde{M}$ (see \cite[p. 45, Theorem 3.4]{Ballmann}), we show that there are counterexamples to Eberlein's conjecture for all dimensions $n>2$.  
\begin{theorem}\label{nonvisibility lattice}
Let $M$ be a finite volume manifold with $-1 \leq K(M) <0$ that is obtained by the Fujiwara construction in \cite{Fujiwara}. Assume that $M$ has dimension $n >2$. Then the universal cover $\widetilde{M}$ of $M$ is not a visibility manifold.   
\end{theorem}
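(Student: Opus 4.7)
The plan is to locate a totally geodesic $3$-submanifold $\Sigma\subset\widetilde{M}$ inside a Fujiwara cusp whose intrinsic geometry is asymptotically $\mathbb{H}^2\times\mathbb{R}$-like (a standard non-visibility Hadamard manifold) and to transfer the failure of visibility from $\Sigma$ to $\widetilde{M}$. In Fujiwara's construction each cusp cross section is a circle bundle over a compact hyperbolic $(n{-}2)$-manifold $N$, and the lift of a cusp to $\widetilde{M}$ is diffeomorphic to $\mathbb{R}\times\mathbb{H}^{n-2}\times[0,\infty)$ with doubly-warped metric
\[
g = dr^2 + e^{-2r}\,g_{\mathbb{H}^{n-2}} + \phi(r)^2\,d\theta^2,
\]
where $\theta\in\mathbb{R}$ covers the $S^1$ fiber and $\phi''/\phi$, $|\phi'|/\phi\to 0$ as $r\to\infty$, so that the sectional curvatures $K(\partial_r,\partial_\theta)=-\phi''/\phi$ and $K(\partial_x,\partial_\theta)=\phi'/\phi$ vanish in the limit.

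First, I choose a complete geodesic line $\ell\subset\mathbb{H}^{n-2}$ (taking $\ell=\mathbb{H}^1$ itself when $n=3$), and let $\Sigma=\{(\theta,x,r)\in\widetilde{M}:x\in\ell\}$. The pointwise stabilizer $H\subset\Isom(\mathbb{H}^{n-2})$ of $\ell$ extends to an isometric action on $\widetilde{M}$ acting trivially on $r$ and $\theta$, and $\Sigma$ is the fixed-point set $\Fix(H)$; hence $\Sigma$ is a totally geodesic $3$-submanifold with induced metric $dr^2+e^{-2r}\,dx^2+\phi(r)^2\,d\theta^2$.

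Next, I argue that $\Sigma$ is not a visibility manifold, exploiting the conservation laws $A=e^{-2r}\dot x$ and $C=\phi(r)^2\dot\theta$ coming from the commuting Killing fields $\partial_x$ and $\partial_\theta$. In the limiting model $\mathbb{H}^2\times\mathbb{R}$, a biinfinite geodesic $(\gamma_H(s\tau),\,c\tau+\theta_0)$ has $\theta$-coordinate with opposite signs at $\tau\to\pm\infty$, so no biinfinite geodesic joins two ideal points whose $\theta$-components share the same sign. Since the sign of $C$ is preserved along any geodesic in $\Sigma$, an analogous sign-based obstruction persists asymptotically, and two distinct ideal points $\xi,\eta\in\partial_\infty\Sigma$ that both arise as limits of rays with positive $\dot\theta$ are not joined by any biinfinite geodesic in $\Sigma$.

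Finally, because $\Sigma$ is totally geodesic in $\widetilde{M}$ and $K(\widetilde{M})<0$ makes the distance function to the convex subset $\Sigma$ convex along every geodesic, any biinfinite geodesic in $\widetilde{M}$ with both ideal endpoints in $\partial_\infty\Sigma$ must lie entirely in $\Sigma$: the convex function $f(\tau)=d(c(\tau),\Sigma)$ tends to $0$ as $\tau\to\pm\infty$ and hence vanishes identically. Thus the non-visible pair $\xi,\eta$ is also not joined in $\widetilde{M}$, proving that $\widetilde{M}$ is not a visibility manifold. The main obstacle is to show that two rays with positive $\dot\theta$ really define distinct ideal points of $\Sigma$ despite the collapse $\phi\to 0$ of the $S^1$-fiber; this requires invoking the slow-decay condition $|\phi'|/\phi\to 0$ to control the accumulated $\theta$-displacement along deep rays and preserve the sign-based obstruction inherited from the model $\mathbb{H}^2\times\mathbb{R}$.
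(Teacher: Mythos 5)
Your proposal starts from a misreading of the Fujiwara cusp metric, and this undermines the central mechanism. Near the deleted codimension-two submanifold $H$ the hyperbolic metric is $dr^2+\cosh^2(r)\,g_H+\sinh^2(r)\,d\theta^2$, and Fujiwara only rescales the radial direction; so as one goes down the cusp the circle fiber collapses while the $H$-direction does \emph{not} collapse --- its coefficient tends to $1$, so the cusp is asymptotically a metric product of $H$ with a ray (times a shrinking circle). Your ansatz $dr^2+e^{-2r}g_{\mathbb{H}^{n-2}}+\phi(r)^2\,d\theta^2$ has the base collapsing exponentially, which is a genuinely different geometry, and you then try to locate the failure of visibility in the $\theta$-direction. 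That cannot work even in your own model: for a unit-speed geodesic the conserved quantity $C=\phi(r)^2\dot\theta$ satisfies $|C|\le\phi(r(\tau))$ for every $\tau$, so any geodesic that enters the deep cusp (where $\phi\to 0$) has $C=0$ and hence $\dot\theta\equiv 0$; there are no ``rays with positive $\dot\theta$'' defining distinct ideal points, and the sign obstruction you borrow from $\mathbb{H}^2\times\mathbb{R}$ evaporates. You flag exactly this as ``the main obstacle'' and leave it unresolved, but it is not a technical loose end --- it is the whole theorem.

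The obstruction actually lives in the direction you collapsed. The paper fixes $\theta$ (the fixed-point set of a reflection), obtaining a totally geodesic surface $S$ isometric to a surface of revolution whose width is bounded \emph{below} by the length $h>0$ of a closed geodesic in $H$. Clairaut's relation $\rho\sin\alpha=\mathrm{const}$ then shows that two rays from a point $p$ with initial angles $\pm\alpha_0$, where $\rho_0\sin\alpha_0<h$, both run down the cusp forever and define distinct ideal points, while any geodesic segment joining deep points on them stays deep (the width is monotone), so the connecting segments escape every compact set and no biinfinite geodesic joins the two ideal points; for $n>3$ one restricts to the totally geodesic $3$-manifold over a closed geodesic in $H$. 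Your final ``transfer'' step is the one part that is essentially sound, though your claim that $d(c(\tau),\Sigma)\to 0$ at both ends is unjustified: one only gets boundedness, hence constancy by convexity, and then $K<0$ (no flat strips) forces the geodesic into $\Sigma$ --- or one argues directly with limits of connecting segments, as the paper does, and avoids the issue altogether.
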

The $4$-manifolds $M$ constructed by Buyalo in \cite{Buyalo} has curvature $-1\leq K<0$ and the map $i_* \colon \pi_1(C) \longrightarrow \pi_1(M)$ is not injective. This implies, by results of Eberlein (\cite{Eberleinlattices}), that the universal cover $\widetilde{M}$ is not a visibility manifold. 

The proof of Theorem \ref{nonvisibility lattice} relies on the existence of a totally geodesic surface in $M$ that is isometric to a surface of revolution $S$ in $\R^3$ whose width has a positive lower bound, and the conservation of angular momentum of geodesics on $S$. The positive lower bound on the width of $S$ comes from the fact that there is a parabolic deck transformation $\gamma\in \pi_1(M)$ such that the displacement function $d(x,\gamma (x))$ has positive infimum. This tempts us to state the following (possibly naive) conjecture and question.
\begin{conjecture}
Let $M$ be a tame, finite volume, negatively curved manifold. Suppose that there is a parabolic element $\gamma \in \pi_1(M)$ such that
\[\inf_{x\in\widetilde{M}} d(x,\gamma (x)) >0.\]
Then $\widetilde{M}$ does not satisfy the visibility axiom.
\end{conjecture}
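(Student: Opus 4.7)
The plan is to generalize the mechanism used to prove Theorem \ref{nonvisibility lattice}, replacing its totally geodesic surface of revolution by a coarser $\gamma$-invariant structure. Let $\xi\in\partial_\infty\widetilde{M}$ be the parabolic fixed point of $\gamma$ and set $f(x):=d(x,\gamma x)$. On the Hadamard manifold $\widetilde{M}$ the function $f$ is convex and $\gamma$-invariant, and by hypothesis $f\ge c>0$, with $\inf f$ never attained in $\widetilde{M}$. A standard fact for parabolic isometries of strictly negatively curved Hadamard manifolds is that any minimizing sequence for $f$ converges to $\xi$; consequently, for every horosphere $H_t$ based at $\xi$ the restriction $f|_{H_t}$ satisfies $\inf_{H_t}f\ge c$, and this infimum is approached as $t$ increases. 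Geometrically, the cusp $\widetilde{M}/\langle\gamma\rangle$ has horospherical $\gamma$-girth bounded below by $c$ at arbitrary depth.

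From this I would try to produce a $\gamma$-invariant cylinder $Z\subset\widetilde{M}$ of controlled transverse width running into the cusp, namely the convex sublevel set $Z_\epsilon:=\{f\le c+\epsilon\}$ for small $\epsilon>0$. This set is convex, $\gamma$-invariant, contains no interior minimum of $f$, and has an $\epsilon$-controlled transverse thickness. Two boundary points $\eta^\pm\in\partial_\infty\widetilde{M}\setminus\{\xi\}$ should then arise as accumulation points of geodesic rays launched from an approximate minimizer $x$ of $f$ in two opposite directions orthogonal to both the $\gamma$-orbit on $H_t$ and the inward normal to $H_t$ --- the analogues of the two equatorial tangent directions at the waist of a surface of revolution. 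To show that no geodesic joins $\eta^-$ to $\eta^+$, one would analyze how any such geodesic interacts with $Z_\epsilon$ using convexity of $f$ and the Busemann function $\beta_\xi$; in the surface-of-revolution case this reduces exactly to conservation of angular momentum.

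The hard step is precisely this last one: replacing the continuous rotational symmetry used in Theorem \ref{nonvisibility lattice} by the discrete $\langle\gamma\rangle$-symmetry. In unbounded negative curvature a geodesic can deflect substantially over a single fundamental domain of $\gamma$, so no pointwise conservation law for angular momentum survives. One therefore needs to quantify the per-period angular deflection of a geodesic in terms of some integrated curvature across the cusp cross-section, and then show that this deflection cannot accumulate enough to carry a geodesic from $\eta^-$ through $Z_\epsilon$ to $\eta^+$. A reasonable intermediate step is to first prove the conjecture under the additional hypothesis that $\widetilde{M}$ contains a $\gamma$-invariant totally geodesic surface through $\xi$ (which essentially recovers Theorem \ref{nonvisibility lattice}), and then to remove that hypothesis perturbatively, exploiting tameness of $M$ and finiteness of $\Vol(M)$, or by averaging a would-be conservation law over several fundamental domains of $\gamma$.
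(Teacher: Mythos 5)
This statement is not a theorem in the paper: it is stated as a conjecture, which the author explicitly flags as ``possibly naive,'' and no proof is offered. So there is no proof of record to compare against, and what matters is whether your argument actually closes the conjecture. It does not, and you say so yourself: the entire content of the proof of Theorem \ref{nonvisibility lattice} is the Clairaut first integral $\rho(\gamma(t))\sin(\alpha(\gamma(t))) = \mathrm{const}$ on a totally geodesic surface of revolution, which forces connecting geodesics $\lambda_n$ to stay deep in the cusp and hence to escape every compact set. That first integral comes from the \emph{continuous} rotational symmetry of the Fujiwara metric around the removed codimension-two submanifold, together with the fact that the width $\varphi(z)$ is bounded below by $h$ and monotone. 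The conjecture retains only the hypothesis $\inf_x d(x,\gamma x) > 0$ for a single parabolic $\gamma$, which gives a discrete $\Z$-symmetry and a lower bound on one ``girth,'' but no conserved quantity along geodesics. Your own final paragraph concedes that no pointwise conservation law survives and proposes to ``quantify the per-period angular deflection'' or ``remove the hypothesis perturbatively'' --- these are restatements of the open problem, not steps of a proof.

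There are also unjustified intermediate claims. Convexity of the displacement function $f(x)=d(x,\gamma x)$ gives convexity of the sublevel set $Z_\epsilon=\{f\le c+\epsilon\}$, but nothing guarantees that $Z_\epsilon$ is a ``cylinder of controlled transverse width'': when the curvature is unbounded (as in all the constructions of this paper) the shape of $Z_\epsilon$ transverse to the $\gamma$-direction is not controlled by $\epsilon$, and the two asymptotic directions $\eta^\pm$ you want to launch ``orthogonal to the $\gamma$-orbit and to the inward normal'' need not exist as well-defined boundary points independent of the basepoint, let alone fail to be joined by a geodesic. The reduction to the case of a $\gamma$-invariant totally geodesic surface essentially re-proves Theorem \ref{nonvisibility lattice} and does not address the general case. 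In short: the strategy is a sensible research program modeled on the paper's Theorem \ref{nonvisibility lattice}, but the decisive step --- a substitute for conservation of angular momentum under a merely discrete symmetry --- is missing, and with it the whole proof.
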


\begin{question}
Let $M$ be a tame, finite volume, negatively curved manifold. Suppose that for each parabolic element $\gamma \in \pi_1(M)$, we have
\[\inf_{x\in\widetilde{M}} d(x,\gamma (x)) = 0.\]
Then does $\widetilde{M}$ need to satisfy the visibility axiom?
\end{question}
%\begin{remark}
%The interesting case is when there is a parabolic $\gamma\in\pi_1(M)$ such that $ d(x,\gamma (x)) \rightarrow 0$ ``very slowly". 
%\end{remark}
This paper is organized as follows. We prove Theorem \ref{inftyend} in Section \ref{sec: infinite ends}. We prove Theorems \ref{strictneg} and \ref{growth} in Section \ref{nonfinitetop}. We prove Theorem \ref{CGVD} in Section \ref{sec:CGVD}.  We prove Theorem \ref{0simpvol} in Section \ref{sec:0simpvol}. We prove Theorem \ref{hyperbolic cusp} in Section \ref{hypcusp}. We prove Theorem \ref{differentclasses} in Section \ref{sec:different classes}. We prove Theorems \ref{peripheral} and \ref{semisimplelattice} in Section \ref{sec: peripheral}. We prove Theorem \ref{nonvisibility lattice} in Section \ref{sec: invisibility}.

\begin{acknowledgement} I would like to thank my advisor, Benson Farb, for inspiring conversations, for opening me to interesting questions on negatively curved manifolds, and for giving extensive comments on earlier versions of this paper. I would like to thank Grigori Avramidi for useful discussions and for suggesting to me various good ideas and questions. I would like to thank Ilya Gekhtman and Shmuel Weinberger for useful conversations. I would like to thank Aaron Marcus for help with the curvature calculations. I would like you thank Igor Belegradek for useful mathematical comments about the paper, and for pointing me to interesting papers on this topics. I would like to thank Grigori Avramidi and Thomas Church for commenting on earlier versions of this paper. %I would like to thank one of the UChicago faculty members who remains unnamed for giving sufficiently boring lectures without which I would not have thought of the question that led to Theorem \ref{CGVD}.
\end{acknowledgement}

\section{Cusp cross sections have zero simplicial volume if $-1 < K <0$}\label{sec:0simpvol}
\begin{theorem}[Small cusps]\label{0simpvol}
Let $M$ be a noncompact, complete, finite volume manifold with $-1<K(M)<0$. Suppose that $M$ is diffeomorphic to the interior of a compact manifold with boundary components $N_i$, for $i = 1, 2, ..., k$. Then for each $i$, we have $||N_i|| = 0$.
\end{theorem}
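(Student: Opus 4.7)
The plan is to exhibit, for each cusp cross section $N_i$, a sequence of Riemannian metrics with uniformly bounded sectional curvature and volume tending to $0$, and then to invoke Gromov's inequality
\[\|N\| \;\leq\; c_n\,\MinVol(N)\]
from \cite{Gromovbounded} to conclude $\|N_i\|=0$. In other words, I will show that each cusp cross section \emph{collapses with bounded curvature}, which already suffices.

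First I would set up horospherical slices of each cusp. Since $M$ is tame with $-1<K(M)<0$, each end corresponds to a cusp region $C_i$ diffeomorphic to $N_i\times(0,\infty)$ that can be identified with a quotient of a horoball in $\widetilde{M}$ by a parabolic subgroup $P_i\leq\pi_1(M)$. Fix a Busemann function $\beta$ for the corresponding point $\xi\in\partial_\infty\widetilde{M}$; the level sets $H_t=\{\beta=-t\}$ descend to smooth hypersurfaces $N_i(t)\subset C_i$, each diffeomorphic to $N_i$, and they foliate the cusp.

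Next I would control the intrinsic geometry of these slices. The shape operator $S_t$ of $H_t$ satisfies the Riccati equation
\[S_t' + S_t^2 + R_\eta \;=\; 0\]
along geodesics orthogonal to $H_t$; combined with $-1\leq K(M)\leq 0$, a standard comparison argument forces the principal curvatures of $S_t$ to lie in $[0,1]$. The Gauss equation then gives a uniform bound $|K_{N_i(t)}|\leq C$ for a constant $C$ depending only on the dimension. Moreover, writing the cusp metric in Fermi coordinates $g|_{C_i}=dt^2+g_t$ along the $\beta$-direction, Fubini yields
\[\int_0^\infty \Vol(N_i(t),g_t)\,dt \;\leq\; \Vol(C_i) \;<\; \infty,\]
so along some sequence $t_k\to\infty$ we have $\Vol(N_i(t_k),g_{t_k})\to 0$. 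Rescaling each $g_{t_k}$ by a fixed constant to enforce $|K|\leq 1$ preserves this, so $\MinVol(N_i)=0$. Gromov's inequality then yields $\|N_i\|=0$.

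The main obstacle is that the clean horospherical picture in Step 1 is easiest when the curvature is pinched, i.e.\ bounded away from $0$; with only $-1<K<0$ the visibility axiom for $\widetilde{M}$ may fail, and Busemann functions are a priori only $C^{1,1}$. I would address this in two ways. First, Heintze--Im Hof's convexity and regularity results still give smooth horospheres wherever one has interior $C^2$ regularity, which is the case here since the cusp is contained, by tameness and the thick-thin decomposition, in the $\epsilon$-thin part where the parabolic subgroup structure is available. Second, if full horospheres are not available globally, one can replace them by level sets of a smoothed distance-to-base function on $C_i$ and use Jacobi-field comparison under the bounds $-1<K<0$ to recover the same two properties: uniformly bounded induced sectional curvature and volumes tending to $0$. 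Either way, the conclusion $\MinVol(N_i)=0$, and hence $\|N_i\|=0$, follows.
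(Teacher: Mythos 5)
Your endgame --- collapse each cusp cross section with bounded curvature and invoke Gromov's inequality $\|N\|\leq c_n\MinVol(N)$ --- is the same as the paper's, but the way you try to produce the collapsing metrics has a genuine gap. Your first step assumes that each end of $M$ is a quotient of a horoball by a parabolic subgroup $P_i\leq\pi_1(M)$, with compact horospherical cross sections diffeomorphic to $N_i$. Under the hypothesis $-1<K(M)<0$ alone this is not available: that structure of the ends is Eberlein's theorem, which requires $\widetilde{M}$ to be a visibility manifold, and visibility can fail in exactly this curvature range (see Theorem \ref{nonvisibility lattice}). Worse, Buyalo's $4$-manifolds with $-1\leq K<0$ have a cusp cross section $C$ for which $\pi_1(C)\to\pi_1(M)$ is not injective, which is incompatible with the cusp being $\partial(\text{horoball})/P$ for a subgroup $P$ of $\pi_1(M)$. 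So the horospherical foliation, the Riccati bound on the shape operator, and the identification of the slices with $N_i$ all rest on a structure theorem that does not hold under the stated hypotheses. Your proposed fallback --- level sets of a smoothed distance function with uniformly bounded second fundamental form and volumes tending to $0$ --- is not a routine Jacobi-field comparison: producing such an exhaustion for an arbitrary complete, finite volume manifold with $|K|<1$ is precisely the content of the Cheeger--Gromov theorem quoted in Section \ref{sec:0simpvol}, and that deep input cannot be sketched away.

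The paper's proof sidesteps the structure of the ends entirely: it takes the Cheeger--Gromov exhaustion $M=\cup_i M_i$ with $\Vol(\partial M_i)\to 0$ and uniformly bounded second fundamental form (hence $\MinVol(\partial M_i)\to 0$), and then uses that for large $i$ there is a degree $1$ map $\partial M_i\to N_j$, so that $\|N_j\|\leq\|\partial M_i\|\leq\mathrm{const}(n)\cdot\MinVol(\partial M_i)\to 0$. If you replace your horosphere construction by a citation of Cheeger--Gromov and add the degree-$1$ map step (note that $\partial M_i$ need not be diffeomorphic to $N_j$; it only needs to dominate it), your argument becomes the paper's.
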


The main ingredient of the above theorem is the following result of Cheeger and Gromov (\cite{CG1}). 
\begin{theorem}[Cheeger-Gromov]
Let $M^n$ be a complete, finite volume manifold and the sectional curvature $K$ of $M$ is bounded $|K| < 1$. Then $M$ admits an exhaustion $M = \cup_i^\infty M^n_i$ by manifolds $M_i$ with boundary such that the volume $\Vol(\partial M_i)\rightarrow 0$ and the second fundamental form of the metric restricted to $\partial M_i$ are uniformly bounded by a constant $c(n)$. 
\end{theorem}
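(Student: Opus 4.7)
The plan is to construct the exhaustion from level sets of a suitably smoothed distance function, control the boundary volume by a coarea argument, and control the second fundamental form by the Hessian comparison theorem.

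Fix a basepoint $p \in M$ and set $\rho(x) = d(p,x)$. Since $|\nabla \rho| \leq 1$ a.e., the coarea formula gives
\[\int_{0}^{\infty} \Vol(\rho^{-1}(t))\, dt \;\leq\; \Vol(M) \;<\; \infty,\]
so $\liminf_{t \to \infty} \Vol(\rho^{-1}(t)) = 0$ and one obtains a sequence $t_i \to \infty$ with $\Vol(\rho^{-1}(t_i)) \to 0$. This disposes of the volume-of-boundary assertion provided the level sets can be made smooth and we have flexibility to choose $t_i$. The key structural input we want from the bound $|K| < 1$ is the Hessian/Riccati comparison: wherever $\rho$ is smooth and $\rho(x) \geq 1$, the shape operator of the regular level set $\rho^{-1}(\rho(x))$ equals $\mathrm{Hess}(\rho)$ restricted to $(\nabla\rho)^{\perp}$, and the Riccati inequality driven by $|K| < 1$ sandwiches this shape operator between $\cot(\rho)\,\mathrm{id}$ and $\coth(\rho)\,\mathrm{id}$. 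In particular, for $\rho \geq 1$ the second fundamental form is bounded in operator norm by $\coth(1)$, a dimension-dependent (in fact dimension-independent) constant.

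The serious difficulty is that $\rho$ is only Lipschitz, failing to be smooth on the cut locus, and the cut locus is generically dense in a finite-volume manifold with small injectivity radius in the cusps. I would replace $\rho$ by a smoothed proper exhaustion $f$ obtained by the Greene--Wu convolution procedure: locally lift $\rho$ to a ball in the tangent space via the exponential map at scale smaller than the injectivity radius, convolve with a fixed bump function, and push back. The resulting $f$ satisfies $|f - \rho| \leq \delta$, $|\nabla f| \geq 1/2$, and $|\mathrm{Hess}(f)| \leq c(n)$ provided the convolution scale is chosen compatibly with the local injectivity radius. Near a point where $\rho$ was already smooth, the convolution preserves the pointwise Hessian bound from the comparison theorem; near the cut locus, the bound on $\mathrm{Hess}(f)$ follows from the fact that $\rho$ is a minimum of smooth distance functions, each of which satisfies the Hessian bound, and a convolution of such minima inherits an upper Hessian bound.

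The main obstacle is executing this smoothing when the injectivity radius of $M$ shrinks to zero in the cusps, which forces the convolution scale to shrink and naive estimates to blow up. Cheeger and Gromov's resolution is to apply their collapsing theory: in the $\epsilon$-thin part of $M$, Gromov's almost-flat manifold theorem together with the Margulis lemma produces an $F$-structure, i.e.\ a sheaf of local nilpotent Killing fields whose orbits foliate the thin part by almost-flat fibers. One performs the smoothing equivariantly with respect to this $F$-structure (averaging along short orbits first, then convolving transversally), so that the scale is dictated by the transverse injectivity radius rather than by the orbit injectivity radius, and the latter can be taken of unit size. With this smoothing in hand, choose a sequence $t_i \to \infty$ of regular values of $f$ (dense by Sard) along which $\Vol(f^{-1}(t_i)) \to 0$, and set $M_i = \{f \leq t_i\}$; the second fundamental form of $\partial M_i$ equals $\mathrm{Hess}(f)|_{T\partial M_i}/|\nabla f|$ and is therefore uniformly bounded by $2c(n)$. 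The $F$-structure construction is the deep ingredient here; the rest is coarea and comparison geometry.
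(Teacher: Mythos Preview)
The paper does not prove this statement at all: it is quoted as a result of Cheeger and Gromov with a citation to \cite{CG1}, and is used as a black box in the proof of Theorem~\ref{0simpvol}. So there is no ``paper's own proof'' to compare against; what you have written is an outline of the original Cheeger--Gromov argument, not a comparison target.

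As an outline, your sketch captures the correct architecture---coarea for the boundary volume, Hessian/Riccati comparison for the second fundamental form, and the $F$-structure/collapsing theory to handle the smoothing at small injectivity radius---and you correctly identify the last of these as the deep ingredient. A couple of details are imprecise, however. The sandwich ``between $\cot(\rho)\,\mathrm{id}$ and $\coth(\rho)\,\mathrm{id}$'' does not give a uniform two-sided bound for $\rho \geq 1$: with only $K<1$ the lower comparison $\cot(\rho)$ blows up to $-\infty$ as $\rho \to \pi^{-}$, so your claimed operator-norm bound $\coth(1)$ is not justified as stated. Also, the assertion that a convolution of a minimum of smooth functions ``inherits an upper Hessian bound'' is fine for the upper bound but does nothing for the lower bound, and you need both to control the second fundamental form. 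These are exactly the points where the genuine Cheeger--Gromov machinery (local nilpotent symmetries in the thin part, equivariant smoothing at the transverse scale) does real work, and your final paragraph acknowledges this rather than carrying it out. In short: right shape, but the Hessian bookkeeping is off and the hard step is name-dropped rather than executed---which is fair for a statement the paper itself only cites.
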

\begin{proof}[Proof of Theorem \ref{0simpvol}]
As pointed out to me by Grigori Avramidi, for large enough $i$, there exists a degree $1$ map $f \colon \partial M_i \longrightarrow C$, where $C$ is a cusp cross section. Hence,
\[||C|| \leq ||\partial M_i||.\]
By the above theorem, the MinVol of $\partial M_i$ tends to $0$ as $i \rightarrow \infty$. Hence, 
\[||\partial M_i || \leq \text{const}(n)\cdot\MinVol (\partial M_i) \rightarrow 0.\]
Therefore, $||C|| = 0$. 
\end{proof}
\begin{remark}
The same conclusion of Theorem \ref{0simpvol} holds if the curvature is bounded with no restriction of the sign of curvature, i.e. $-1< K < 1$. 
\end{remark}
\section{Proof of Theorem \ref{hyperbolic cusp}}\label{hypcusp}
\subsection{The construction}
Let $M$ be a compact hyperbolic manifold of dimension $n$. Let $N$ be a totally geodesic, embedded, codimension $1$ submanifold of $M$. In this case, $N$ is a compact hyperbolic manifold of dimension $(n-1)$. The hyperbolic metric of $M$ on an $\delta$-neighborhood $T$ of $N$ is 
\[g_{hyp}^M = dt^2 + \cosh^2(t)g_{hyp}^N, \]
where $h_N$ is the hyperbolic metric on $N$ and $t$ is the distance from a point in $T$ to $N$. 

Now, for $\epsilon < \delta/2$, we delete the $\epsilon$-tubular neighborhood $W$ of $N$ corresponding to $r \in (-\epsilon, \epsilon)$. If $N$ is separating, then we obtain two manifolds with boundary. Otherwise, the resulting space is a manifold with two boundary components. In either case, we will modify the metric of the interior manifold near the boundary to get a complete, negatively curved manifold with no boundary.   

Let $X$ be a component of $M\setminus W$ and let $Y$ be a component of the boundary of $X$. Let $Y_\epsilon$ be the $\epsilon$-neighborhood (with respect to the hyperbolic metric on $X$) of $Y$. Before, doing any modification to the hyperbolic metric, we need to reparametrize the open set $Y_\epsilon \setminus Y$. The $x_i$'s coordinates remains the same, we just stretch the $t$ coordinates as follow. Let 
\[\alpha \colon (\epsilon, 2\epsilon) \longrightarrow (-2\epsilon, \infty)\]
such that the interval $(3\epsilon/2, 2\epsilon)$ is map isometrically (with respect to the standard metric on $\R$) to $(-2\epsilon,-3\epsilon/2)$, and the interval $(\epsilon, 3\epsilon/2)$ is mapped to $(-3\epsilon/2, \infty)$. We are going to abuse notation and call this reparametrization $t$. 

The metric we will give $Y_\epsilon \setminus Y$ is the following 
\[ g = dt^2 + \dfrac{4f^2(t)}{(1-r^2)^2}(dx_1^2 +dx_2^2 + ... + dx_{n-1}^2),\]
for $r^2 = x_1^2 + ... + x_{n-1}^2$ and some function $f$ that satisfies the conditions in the following lemma whose proof we leave as an exercise for the reader.
\begin{lemma}
Let $a < 0$. There is a convex function $f \colon [a, \infty) \longrightarrow (0, \infty)$ that agrees with $\cosh(t)$ at $t= a$ for at least the value of the function, the first and second derivatives (from the right) at $a$, and
\[\lim_{t \rightarrow \infty} t^3f(t) = 0.\] 
\end{lemma}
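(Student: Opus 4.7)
The plan is to reduce the problem to constructing a single smooth auxiliary function. Setting $g(t) := -f'(t)$, the conditions on $f$ translate neatly: convexity of $f$ becomes monotonicity ($g$ nonincreasing), the prescribed data at $a$ become $g(a) = -\sinh(a) > 0$ and $g'(a^+) = -\cosh(a) < 0$, and the value condition $f(a) = \cosh(a)$ together with $f > 0$ and $\lim t^3 f(t) = 0$ (which forces $f(t) \to 0$) become
\[
\int_a^\infty g(t)\,dt = \cosh(a), \qquad g(t) = o(1/t^4).
\]
If one can produce a smooth $g:[a,\infty)\to(0,\infty)$ satisfying these, then $f(t) := \int_t^\infty g(s)\,ds$ automatically satisfies every requirement of the lemma, since $f'(t) = -g(t) \leq 0$, $f''(t) = -g'(t) \geq 0$, $f(a) = \cosh(a)$, and $t^3 f(t) \leq t^3 \int_t^\infty g \to 0$.

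Next I would build such a $g$ explicitly by gluing two elementary pieces. On an initial interval $[a, a+\delta]$, pick a $C^\infty$ nonincreasing positive function $g_0$ agreeing with the second-order polynomial $-\sinh(a) - \cosh(a)(t-a) + \tfrac{\epsilon}{2}(t-a)^2$ to second order at $a$, with $\epsilon > 0$ chosen small enough that $g_0$ remains positive and strictly decreasing on $[a, a+\delta]$; on $[T,\infty)$ for some $T > a+\delta$, take the tail $g_1(t) = C(1+t)^{-5}$, which is smooth, positive, strictly decreasing, and $o(1/t^4)$. On the transition interval $[a+\delta, T]$, interpolate by a smooth, strictly decreasing, positive function (for example via a standard partition-of-unity bump). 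The resulting $g$ is smooth, nonincreasing, and positive on $[a,\infty)$, matches the required jet data at $a$, and has finite integral $I$. One free parameter remains (either $T$, or the constant $C$, or the length of the transition); varying it continuously takes $I$ through all sufficiently large values and all sufficiently small positive values, so intermediate value applied to a continuous one-parameter family produces $I = \cosh(a)$.

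The main technical point is ensuring that the target integral $\cosh(a)$ is actually attainable under the constraints that $g(a) = |\sinh(a)|$ and $g$ is nonincreasing. Since $g$ is bounded above by $g(a) = |\sinh(a)| < \cosh(a)$, one cannot achieve the integral too quickly; but nothing prevents spreading the mass out, and this is exactly why the tail is free. This is the only real obstacle, and the identity $\cosh^2(a) - \sinh^2(a) = 1$ makes it harmless: the construction works for every $a<0$.
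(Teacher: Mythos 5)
Your argument is correct. The paper explicitly leaves this lemma as an exercise for the reader, so there is no proof to compare against; your reduction to constructing $g=-f'$ (nonincreasing, positive, with $g(a)=-\sinh(a)$, $g'(a^+)=-\cosh(a)$, total integral $\cosh(a)$, and tail $o(1/t^4)$) is a clean and complete way to fill that gap, and the gluing-plus-intermediate-value step is routine. One small remark: the closing appeal to $\cosh^2(a)-\sinh^2(a)=1$ is not actually needed --- all that matters is that the one-parameter family of integrals sweeps out an interval containing $\cosh(a)$, which your plateau-stretching argument already provides.
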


The number $a$ in the above lemma will be chosen to be $-3\epsilon/2$. By the curvature calculation in the next section, if we pick $f$ to be a function that satisfies the condition in the lemma, then the manifold $Z = X \setminus Y$ is $C^2$ negatively curved with respect to the metric $h$ obtained by gluing the hyperbolic metric on $X \setminus Y_\epsilon$ to the metric $g$ on $Y_\epsilon \setminus Y$, and the curvature is unbounded from below. The manifold $(Z, h)$ is complete since 
\[\int_a^\infty 1 dt = \infty,\]  
and has finite volume since
\[\vol(Z,h) = \vol(X\setminus Y_\epsilon, g_{hyp}) + \vol(Y_\epsilon, g).\]
Clearly the first term on the right hand side is finite. The second term
\[\vol(Y_\epsilon, g) = \int_a^\infty \vol(N, g_{hyp}^N)f(t) dt,\]
which is finite since $f(t)$ decays faster than $1/t^3$.

\begin{remark} 
One can replace $t^3$ by $e^t$ in the above theorem and make $K <-1$.
\end{remark}

\begin{question} 
Do these manifolds admit an analytic metric with negative sectional curvature?
\end{question}

\subsection{Curvature calculations}
Let $g$ be the following metric.
\[ g = dt^2 + \dfrac{4f^2(t)}{(1-r^2)^2}(dx_1^2 +dx_2^2 + ... + dx_{n-1}^2),\]
for $r = \sqrt{x_1^2 +x_2^2 + ... + x_{n-1}^2}$ and $f(t)$ is a positive, convex function defined for $t \in (0,\infty)$. Then the nonzero components of the curvature tensor are
\[R(\dfrac{\partial}{\partial t}, \dfrac{\partial}{\partial x_i}, \dfrac{\partial}{\partial t}, \dfrac{\partial}{\partial x_i}) = -\dfrac{4f(t)f''(t)}{(1-r^2)^2},\]
%\[R(\dfrac{\partial }{\partial t}, \dfrac{\partial t}{\partial y}, \dfrac{\partial}{\partial t}, \dfrac{\partial}{\partial y}) = -\dfrac{4f(t)f''(t)}{(1-r^2)^2},\]
\[R(\dfrac{\partial }{\partial x_i}, \dfrac{\partial}{\partial x_j}, \dfrac{\partial}{\partial x_i}, \dfrac{\partial}{\partial x_j}) = \dfrac{16f(t)^2(1+f'(t)^2)}{(1-r^2)^4}.\]
Hence, the sectional curvature at a point of the corresponding $3$ planes are negative, as below
\[K(\dfrac{\partial }{\partial t}, \dfrac{\partial }{\partial x_i}) = -\dfrac{f''(t)}{f(t)},\]
%\[K(\dfrac{\partial }{\partial t}, \dfrac{\partial }{\partial y}) = -\dfrac{f''(t)}{f(t)},\]
\[K(\dfrac{\partial }{\partial x}, \dfrac{\partial }{\partial y}) = -\dfrac{1+f'(t)^2}{f(t)^2}.\]
(Observe that if $f(t) = \cosh^2(t)$, then $g$ is the hyperbolic metric). 

Since all cross terms of the curvature tensor are $0$, the sectional curvature of a plane is a convex combination of the above numbers, and thus, is negative. Observe that for $i \ne j$,
\[ \lim_{t\rightarrow \infty}K(\dfrac{\partial }{\partial x_i}, \dfrac{\partial }{\partial x_j}) =  \lim_{t\rightarrow \infty} -\dfrac{1+f'(t)^2}{f(t)^2} = -\infty\] 
since $f(t) \rightarrow 0$ as $t \rightarrow \infty$. So $M$ is a negatively curved manifold whose sectional curvature is unbounded from below. We would like remark that if one pick $f$ to be eventually a multiple of $e^{-t}$, we can make the curvature of $M$ bounded away from $0$.

\section{Different classes of tame, negatively curved manifolds}\label{sec:different classes}
We discuss Proposition \ref{differentclasses} in this section. First, we start with the case in which the curvature is pinched, i.e. $-a^2 < K < -1$. This includes locally symmetric spaces of rank $1$. The fundametal group of a cusp is virtually abelian if $M$ is hyperbolic, and is virtually $2$-step nilpotent otherwise. Benson Farb told me that it is not known whether there is a pinched negatively curved manifold whose cusp is nilpotent with more than $2$ steps.

The examples in \cite{Fujiwara} and \cite{AS} are of dimension at least $4$ and are obtained by firstly removing codimension $2$, totally geodesic submanifold(s) $N$ of a compact hyperbolic manifold $M$, and then stretching out the hyperbolic metric around $N$ to make the resulting space $X = M - N$ complete without changing the sign of the curvature. Since the cusp cross section of $X$ will be a circle (unit normal) bundle over the compact hyperbolic manifold $N$, it is not an infra-nilmanifold. Therefore, $X$ does not admit pinched negatively curved metric.  

The examples in the proof of Theorem \ref{hyperbolic cusp} has curvature bounded away from $0$ and approaching $-\infty$. The cusp cross section $C$ is a compact hyperbolic manifold, which has non-zero simplicial volume (by Theorem \ref{0simpvol}). So the these manifolds do not belong to the above two classes. 

Combining the techniques in \cite{Fujiwara}, \cite{AS} and that in the proof of Theorem \ref{hyperbolic cusp}, we can construct a negatively curved manifold $M$ of dimension at least $4$ with two ends, one of which has curvature $K \rightarrow0$ and the other one has $K \rightarrow -1$. It is not hard to see that any negatively curved metric on $M$ must have curvature $K \rightarrow -\infty$. It will be interesting to prove (or disprove) that any negatively curved metric on $M$ must also have $K \rightarrow 0$. (See Question \ref{question: K->0} in the Introduction.)

\section{Examples of negatively curved manifolds with peripheral fundamental groups}\label{sec: peripheral}
We will prove Theorem \ref{peripheral} and Theorem \ref{semisimplelattice} at the end of this section. Firstly we need to prove the following theorem, which also gives another way to obtain finite volume, negatively curved manifolds (that may or may not be of finite topological type).
 
\begin{theorem}\label{cycliccover}
Let $M$ be a compact negatively curved manifold. Suppose that there is a surjection $\phi \colon \pi_1(M) \longrightarrow\Z$.  Let $X$ be the cover of $M$ that corresponds to $\Ker\phi$. Then there is a complete, finite volume, negatively curved Riemannian metric on $X$. 
\end{theorem}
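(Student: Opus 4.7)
The plan is to construct the desired metric on $X$ as a conformal deformation of the pulled-back metric from $M$. Write $g_0$ for the given negatively curved metric on $M$, so that $K(g_0)\le -c<0$ for some $c>0$ by compactness, and let $g$ denote its lift to $X$; the only defect of $g$ is that $\Vol(X,g)=\infty$. First I would use the cohomological interpretation of $\phi$: choose a smooth closed $1$-form $\omega$ on $M$ representing $[\phi]\in H^1(M;\Z)$, whose pullback to $X$ is exact, equal to $dh$ for a smooth function $h\colon X\to\R$ satisfying $h(Tx)=h(x)+1$, where $T$ generates the deck group. Because $dh$ and $\nabla^2 h$ descend to tensors on the compact manifold $M$, there are uniform bounds $|dh|_g\le A$ and $|\nabla^2 h|_g\le B$.

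Next, fix an exponent $\alpha\in(1/n,1]$ (for concreteness $\alpha=1$) and a large parameter $N\gg 1$ to be chosen at the end. Set
\[u(x)=-\tfrac{\alpha}{2}\log\!\left(1+\frac{h(x)^2}{N^2}\right),\qquad \tilde g=e^{2u}g.\]
A short calculation gives $|u'(t)|\le \alpha/(2N)$ and $|u''(t)|\le \alpha/N^2$, so $|\nabla u|_g$ and $|\nabla^2 u|_g$ are both $O(1/N)$ uniformly on $X$.

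Finite volume follows from partitioning $X$ into $T$-fundamental slabs $F_k=h^{-1}([k,k+1])$, each of $g$-volume $\Vol(M,g_0)$, giving
\[\Vol(X,\tilde g)\le \Vol(M,g_0)\sum_{k\in\Z}\left(1+\frac{k^2}{N^2}\right)^{-n\alpha/2},\]
which converges because $n\alpha>1$. Completeness follows because $\int_0^\infty (1+s^2/N^2)^{-\alpha/2}\,ds=\infty$ for $\alpha\le 1$, and since $|dh|_g\le A$, any $\tilde g$-rectifiable path $\gamma$ from $x_0$ to $y$ satisfies
\[L_{\tilde g}(\gamma)\ge \frac{1}{A}\int_{h(x_0)}^{h(y)}\!\left(1+\frac{s^2}{N^2}\right)^{-\alpha/2}\!ds;\]
any divergent sequence in $X$ forces $|h|\to\infty$ because each slab $F_k$ is compact. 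For negative curvature I would invoke the conformal-change formula, which for $g$-orthonormal tangent vectors $V,W$ reads
\[\tilde K(V,W)=e^{-2u}\bigl\{K_g(V,W)-\nabla^2 u(V,V)-\nabla^2 u(W,W)+(Vu)^2+(Wu)^2-|\nabla u|^2\bigr\}.\]
The last three terms lie in $[-|\nabla u|^2,0]$ and are $O(1/N^2)$, while the Hessian terms are $O(1/N)$, so the braced quantity is at most $-c+O(1/N)$; choosing $N$ large enough makes it strictly negative.

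The subtle point, and what I expect to be the main obstacle, is the three-way competition among finite volume (which needs $e^{nu}$ integrable along the $h$-direction, forcing $\alpha>1/n$), completeness (which needs $\int e^{u}$ to diverge along $h$, forcing $\alpha\le 1$), and preservation of negative curvature (which needs $|\nabla^2 u|$ uniformly smaller than $c$). A single parameter $\alpha$ does not suffice for arbitrary $M$. Introducing the second parameter $N$ — equivalently, replacing $h$ by $h/N$ before composing with the profile $t\mapsto -(\alpha/2)\log(1+t^2)$ — decouples these issues: $\alpha$ controls integrability, while $N$ controls the pointwise perturbation of the curvature. Once this is arranged, the verification is mechanical.
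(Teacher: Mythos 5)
Your proposal is correct, but it takes a genuinely different route from the paper. The paper's proof is a block construction: it cuts $X$ along the lifts $S_k$ of a regular fiber $S=f^{-1}(a)$ of a map $f\colon M\to\mathbb{S}^1$ inducing $\phi$, and rescales the lifted metric on the $k$-th fundamental domain by a \emph{constant} (eventually $\sim 1/k$), interpolating between consecutive constants inside product collars $V_k\cong S\times I$ with small first and second derivatives; negativity of curvature is automatic where the factor is constant and is checked only in the collars, while completeness and finite volume come from $\sum 1/k=\infty$ and $\sum (1/k)^n<\infty$. You instead use a single global conformal factor $e^{2u}$ with $u$ an explicit function of a primitive $h$ of the pulled-back $1$-form, and verify everything at once with the conformal change formula. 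The two constructions produce essentially the same asymptotic decay (your $e^{u}\sim N/|h|$ matches the paper's $1/k$ on the $k$-th slab), and both hinge on the same dichotomy $\int e^{u}=\infty$ versus $\int e^{nu}<\infty$ for $n\geq 2$. What your version buys is uniform quantitative control: the curvature estimate $\tilde K\le e^{-2u}(-c+O(1/N))$ holds everywhere simultaneously, the two parameters $(\alpha,N)$ cleanly decouple integrability from curvature perturbation, and — since $e^{-2u}\ge 1$ — your metric visibly has $K$ bounded away from $0$, a property the paper needs later (in the proof of Theorem \ref{semisimplelattice}) and obtains from its construction only implicitly. The paper's version is more elementary (no conformal curvature formula) but leaves the collar-interpolation curvature check as an assertion. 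Two cosmetic points in your write-up: use half-open slabs $h^{-1}([k,k+1))$ so they genuinely partition $X$ with $g$-volume exactly $\Vol(M,g_0)$ each (the closed slabs could a priori overlap in positive measure where $dh=0$); and note that $\nabla^2 u=\psi''(h)\,dh\otimes dh+\psi'(h)\nabla^2 h$ is $O(1/N)$ only because $\nabla^2 h$ is uniformly bounded, which is exactly the point of choosing $h$ as a primitive of a form pulled back from the compact $M$ — worth stating explicitly.
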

  
The above theorem was first motivated by the example in Theorem \ref{peripheral}. An example of such an $M$ is a surface bundle over a circle with pseudo-Anosov monodromy. Thurston proved that such manifolds have a hyperbolic metric. The generality of the above theorem was suggested to me by Shmuel Weinberger.

\begin{proof}
There is a smooth map $f \colon M \rightarrow \mathbb{S}^1$. Let $a$ be a regular value of $f$, and let $S = f^{-1}(a)$. Such an $a$ exists by Sard's theorem. Then $S$ is a submanifold of $M$. Let $I$ be an interval containing $a$ that contains only regular values of $f$. Let $U = f^{-1}(I) \cong S\times I$. We assume that $I = [-\frac{1}{2},\frac{1}{2}]$ by a change of coordinates, and suppose that $a = 0$. Choose a local parametrization of $U$ so that $U = S\times I$. We don't need to do all of this but this is good psychologically for the author (and hopefully for the readers).

Pick a basepoint $s_0 \in S$ and pick a lift $x_0$ of $s_0$ on $X$. Let $V_0$ be the lift of $U$ that contains $x_0$. The group $\Z$ acts on $X$ via covering space transformations. For each $k \in \Z$, let $V_k$ be the image of $V_0$ under transformation corresponding to $k$.

Let $g$ be the metric on $X$ that is lifted from the negatively curved metric on $M$. We perturb the metric $g$ on $X$ within $V_0$ by scaling $g$ by a function $c_0(t)$. We can pick $c_0 \colon I \longrightarrow \R$ to be a smooth function such that $c_0([-\frac{1}{2},-\frac{1}{4}]) = \{1\}$, and $c_0([\frac{1}{4},\frac{1}{2}]) = \{1-\epsilon\}$ for some $\epsilon$, and the absolute value of the first and second derivatives of $c_0$ is smaller than $\varepsilon$ for some $\varepsilon$ so that the sectional curvature $K_{c_0.g}$ of $(V_0, c_0.g)$ is negative. This can be done since $K_g$ is negative and bounded away from $0$. 

Let $S_0$ be the lift of $S$ that contains $x_0$. For each $k \in \Z$, let $S_k$ be the image of $S_0$ under the covering space transformation corresponding to $k$. Let $D_k$ be a fundamental domain of $X$ that is bounded by $S_k$ and $S_{k+1}$. The set $V_0$ is diffeomorphic to $S_0\times[-\frac{1}{2},\frac{1}{2}]$. Let $S_0^+ = S_0\times\frac{1}{2}$ and let $S_0^- = S_0\times-\frac{1}{2}$. We assume that $S_0^+$ is contained in the interior of $D_0$. We define $S_k^+$ and $S_k^-$ the obvious way. 

For $D_0$, we scale the metric $g$ on $(D_0 - (V_0\cup V_1))$ by $c_0(\frac{1}{2}) = 1 - \epsilon$. Call the new metric $\hat{g}$. Since $c_0$ is constant on a connected neighborhood of $\frac{1}{2}$, the rescaled metric $\hat{g}$ on $(D_0 - V_1)$ is smooth. It is not hard to see that the sectional curvature of $(D_0, \hat{g})$ is negative.     

Let $m$ be a positive integer such that $\frac{m-1}{m} < (1- \epsilon)$. We could have picked $\epsilon$ above to be such that $(1 - \epsilon)^d = \frac{1}{m}$ for some $d \in \N^+$. For each $k < d$, let $c_k \colon I \longrightarrow \R$ be defined the same as $c_0$ but with $(1-\epsilon)$ replaced by $(1-\epsilon)^{k+1}$, and $c_k([-\frac{1}{2},-\frac{1}{4}]) = \{(1-\epsilon)^k\}$. For $k \geq d$, let $c_k \colon I \longrightarrow \R$ be defined the same as $c_0$ but with $(1-\epsilon)$ replaced by $\frac{1}{k-d+m+1}$, and $c_k([-\frac{1}{2},-\frac{1}{4}]) = \{\frac{1}{k-d+m}.\}$.

We rescale the metric $g$ on $V_k$'s and $D_k$'s using the function $c_k$ in the same way as we did for $D_0$. This is how we extend the metric $\hat{g}$ smoothly to $X$. It is not hard to see that the sectional curvature $K_{\hat{g}}$ is negative at every point in $X$.

We are left to show that $(X,\hat{g})$ is complete and $\Vol(X,\hat{g}) < \infty$. To show that these are true reduces to showing that
\[ \sum_{k\in\N} \dfrac{1}{k} = \infty,\]
which implies that $(X, \hat{g})$ is complete; and
\[\sum_{k\in\N} \left(\dfrac{1}{k}\right)^n < \infty\]
for $n > 1$, which implies that $\Vol(X,\hat{g})$ is finite. But the convergence properties of the above two infinite sums follow from the $p$-test.

We modify the metric of $X$ on the other end in the same way to get the desired metric. 
\end{proof} 

\begin{proof}[Proof of Theorem \ref{peripheral}] 
Let $M$ be a bundle over the circle $\mathbb{S}^1$ with fiber a compact surface $\Sigma_k$ and pseudo-Anosov monodromy. Then there is a sujection $\pi_1(M) \longrightarrow\mathbb{S}^1 \cong \Z$. By Theorem \ref{cycliccover}, the manifold $M$ has a complete, finite volume, negatively curved Riemannian metric. 
\end{proof}

\begin{proof}[Proof of Theorem \ref{semisimplelattice}]
By construction, the metric constructed on $X$ in the proof of Theorem \ref{peripheral} has curvature $K < -1$ up to scaling. It follows that if $\gamma$ is an isometry of $\widetilde{X}$, then
\[\inf_{x\in \widetilde{X}}d_{\hat{g}}(x,\gamma(x)) = 0.\]

Suppose that there is a parabolic deck transformation $\gamma \in\pi_1(X)$. Then there is a sequence $x_i$, for $i = 1,2, 3,...$ such that $d_{\hat{g}}(x_i,\gamma (x_i)) \rightarrow 0$. %Let $w_0$ be the unique geodesic representative of $\gamma$ in $(X,g_{hyp})$. 
Let $a_{\gamma}$ be the axis of translation of $\gamma$. With respect to the hyperbolic metric on $X$, the displacement $d_{g_{hyp}}(x,\gamma(x))$ grows exponentially with respect to the distance $\rho(x):= d_{g_{hyp}}(x,a_\gamma)$. 

By construction of the metric $\hat{g}$, we have $d_{\hat{g}}(x_i,\gamma (x_i))$ grows at least $e^{\rho(x)}/\rho(x)$, which tends to $\infty$ as $d_{g_{hyp}}(x,a_\gamma)\rightarrow \infty$. This is a contradiction to the assumption that $\gamma$ is a parabolic deck transformation. Hence, with respect to the metric $\hat{g}$, all non-identity elements of  $\pi_1(X)$ are hyperbolic isometries. 
\end{proof}

\section{Negatively curved manifolds with untame ends}\label{nonfinitetop}
\subsection{The general theme.}

As in Section \ref{hypcusp}, let $Y^n$ be a compact hyperbolic manifold of dimension $n \geq 2$. Let $N^{n-1}$ be a union of disjoint, compact, totally geodesic submanifolds of $Y$. Let $X$ be the compact manifold with boundary whose interior is diffeomorphic to a connected component of $Y\setminus N$. We call $X$ a \emph{block}.

The general theme of the constructions that appear in this section and the next section is the following. 
\begin{itemize}

\item[1)] Pick a graph $G$ with a based vertex $v_0$. We take blocks with the right number of boundary components and glue them along pairs of boundary components to obtain a noncompact manifold $M$ that has the structure of a \emph{graph of spaces} with underlying graph $G$. The vertex spaces are the pieces, and edge spaces are boundary components of the pieces. Let $V(G)$ be the set of vertices of $G$, and let $E(G)$ be the set of edges of $G$. Assume that each edge of $G$ has length $1$. For each vertex $v \in G$, let $w(v)$ be the distance in $G$ from $v$ to $v_0$.

\item[2)] Let $X_0$ be the block corresponding to $v_0$. We give $X_0$, a truncated negatively curved metric with $K < -1$, where the truncation is to be chosen later. Let  
\[\lambda \colon \N \longrightarrow (0,\infty) \]
be an increasing function to be chosen later. For each vertex $v$, we give the block corresponding to $v$ a truncated negatively curved metric with $K < -1$ but scaled by a factor of $1/{\lambda(w(v))}$, where the truncation is to be chosen later.

\item[3)] We pick the truncation for the metric of each block, starting from $X_0$ and move radially out, such that the gluing on each pair of boundary components is via an isometry. Then we smooth the metrics at the gluing while keeping the volume finite but keeping the curvature bounded from above by $-1/10^{10}$, and making sure that the metric is complete. Let $h$ be the obtained metric.
\end{itemize}

Then $(M,h)$ will be the desired manifold.

\bigskip
\noindent
\textbf{Gluing and smoothing out metrics.} We elaborate on steps $2$ and $3$. We give $Y\setminus N$ the complete, finite volume, negatively curved metric $g_0$ constructed in the proof of Theorem \ref{hyperbolic cusp}. For each cusp $C$ of $Y\setminus N$ the metric $g_0$ has the form 

\[g_0 = dt^2 + \dfrac{4f^2(t)}{(1-r^2)^2}(dx_1^2 +dx_2^2 + ... + dx_{n-1}^2),\]
for $t \in (0,\infty)$ and $x_i$ are coordinates of a cross section $S$ of the cusp.
For a block $X$, we give $X$ be the metric $g_X = \dfrac{1}{\lambda(w(v))^2}\cdot g_0$ but truncated at $t = T$, for some large $T$ to be picked later. 

If we multiply $g_0$ by a constant $1/A^2 = 1/\lambda(w(v))^2$, then we get
\[\dfrac{1}{A^2}g_0 = \dfrac{1}{A^2}\cdot\left(dt^2 + \dfrac{4f^2(t)}{(1-r^2)^2}(dx_1^2 +dx_2^2 + ... + dx_{n-1}^2)\right),\]
which can be written as, under the change of coordinates $\tau = t/A + \text{const}$ and $x_i = x_i$,
\[\dfrac{1}{A^2}g_0 = d\tau^2 + \dfrac{4f^2(A\tau)}{A^2(1-r^2)^2}(dx_1^2 +dx_2^2 + ... + dx_{n-1}^2),\]
We pick $f(t) = e^{-t}$ for large enough $t$. Then $f(At) = e^{-At}$.

Now we address the truncation of $X$. Suppose that two blocks $(X,g_X)$ and and $(X',g_{X'})$ are glued along their boundary components $C$ and $C'$ respectively. In all the constructions that we will carry out the cross section $C$ and $C'$ with the restricted metrics $g_X$ and $g_{X'}$ respectively are homothetic. Let $g_C$ and $g_{C'}$ be the metric on the boundary $C$ and $C'$ respectively. We choose the truncation $t = T$ large enough so that $g_C = g_{C'}$, and that the diameter of $(X,g_X)$ is at least $1$ (this is to guarantee completeness of $M$ later). 

We pick the function $\lambda\colon\N\longrightarrow (0,\infty)$ mentioned above based on the graph $G$. The point is that we pick $\lambda$ such that the growth of $\lambda$ is fast enough to guarantee finiteness of the infinite sum of the volume of all the blocks. One can always do this if $G$ is locally finite. 

Let $g$ be the Riemannian metric $M$ obtained by gluing the metrics on the blocks of $M$ together. Then $g$ is continuous but not $C^1$, and $g$ is complete with sectional curvature (whenever it is defined) $K <-1$. The non-smooth part of the metric occurs at the gluing. (We remark the path metric induced by $g$ is locally $\CAT(-1)$.)  

Now we need to smooth out the metric $g$ to get a smooth metric $h$ without altering completeness, finite volume while keeping the curvature $K > -10^{-10}$. This reduces to Lemma \ref{smoothing}.
\begin{lemma}\label{smoothing}
Let $f \colon \R \longrightarrow \R$ be defined as
\[f(t) = 
\begin{cases}
&  e^{-A(t+2a)}  \qquad \text{if} \quad t \leq 0 \\
&  e^{2A(t-a)} \:\;  \qquad \text{if} \quad t \geq 0.
\end{cases},\]
for some constant $A \geq 2$. Then there exists a $C^2$ function $h \colon \R \longrightarrow \R$ that agrees with $f$ outside $(-1/A,1/A)$ such that $\dfrac{h''(t)}{h(t)} > 10^{-10}$ for all $t \in \R$.
\end{lemma}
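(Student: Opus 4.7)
The plan is to seek $h$ in the form $h(t) = e^{\phi(t)}$, which turns the desired inequality into
\[
\frac{h''(t)}{h(t)} = \phi''(t) + \big(\phi'(t)\big)^2 > 10^{-10}.
\]
Outside $(-1/A, 1/A)$ we are forced to take $\phi(t) = \log f(t)$, which is piecewise affine with slopes $-A$ on the left and $2A$ on the right; there $\phi'' + (\phi')^2$ equals $A^2$ or $4A^2$, and since $A \geq 2$ this is at least $4$, comfortably above $10^{-10}$. The entire problem therefore reduces to extending $\phi$ as a $C^2$ function across the interval $(-1/A, 1/A)$ while keeping the quantity $\phi'' + (\phi')^2$ strictly positive.

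The boundary data to match at $t = \mp 1/A$ are $\phi(-1/A) = 1 - 2aA$, $\phi'(-1/A) = -A$, $\phi''(-1/A) = 0$ and $\phi(1/A) = 2 - 2aA$, $\phi'(1/A) = 2A$, $\phi''(1/A) = 0$. I would construct $\phi$ by first prescribing its derivative
\[
\phi'(t) = \frac{A}{2} + \frac{3A}{2}\sin\!\left(\frac{\pi A t}{2}\right), \qquad t \in [-1/A, 1/A],
\]
and then integrating. Since $\sin(\pm \pi/2) = \pm 1$, one gets $\phi'(-1/A) = -A$ and $\phi'(1/A) = 2A$; the second derivative $\phi''(t) = (3\pi A^2/4)\cos(\pi A t/2)$ vanishes at both endpoints; and because the sine integrates to $0$, $\int_{-1/A}^{1/A}\phi'(t)\,dt = 1$, which is exactly the required jump $\phi(1/A) - \phi(-1/A)$. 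Fixing the additive constant so that $\phi(-1/A) = 1 - 2aA$, all six matching conditions hold, so $h = e^\phi$ is $C^2$ on $\R$.

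The final step is to verify the pointwise lower bound. On $[-1/A, 1/A]$ one has $\phi'' \geq 0$ because $\cos(\pi A t/2) \geq 0$, with equality only at the endpoints — and at the endpoints $(\phi')^2$ equals $A^2$ or $4A^2$. On the other hand, $(\phi')^2$ can vanish only at the interior point where $\sin(\pi A t/2) = -1/3$, and there $\cos(\pi A t/2) = 2\sqrt{2}/3 > 0$, so $\phi'' > 0$. Hence $\phi'' + (\phi')^2$ is a continuous, strictly positive function on a compact interval. A one-variable inspection of
\[
A^2\!\left[\frac{3\pi}{4}\cos u + \Big(\frac{1}{2} + \frac{3}{2}\sin u\Big)^{\!2}\right], \qquad u \in [-\pi/2,\pi/2],
\]
gives a uniform lower bound of the form $cA^2$ with $c > 0$ an absolute constant, and since $A \geq 2$ this is certainly greater than $10^{-10}$.

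The only step requiring any genuine attention is the last verification of positivity; everything else is routine gluing of elementary functions. I do not anticipate a serious obstacle, as the construction above has plenty of slack built in (the $10^{-10}$ threshold is far weaker than what the construction actually provides).
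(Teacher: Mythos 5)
Your construction is correct, and it actually supplies a proof where the paper offers none: the paper's ``proof'' of this lemma is literally ``Calculus exercise for the readers.'' Writing $h=e^\phi$ reduces the claim to $\phi''+(\phi')^2>10^{-10}$, and your interpolant $\phi'(t)=\tfrac{A}{2}+\tfrac{3A}{2}\sin(\pi At/2)$ on $[-1/A,1/A]$ matches all six boundary conditions: the endpoint slopes $-A$ and $2A$, vanishing second derivatives at $t=\mp 1/A$ (since $\cos(\mp\pi/2)=0$), and total increment $\int_{-1/A}^{1/A}\phi'=1$, which is exactly $\log f(1/A)-\log f(-1/A)=(2-2aA)-(1-2aA)$. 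On the transition interval the two terms $\tfrac{3\pi A^2}{4}\cos u$ and $A^2(\tfrac12+\tfrac32\sin u)^2$ never vanish simultaneously (the square is zero only at $\sin u=-1/3$, where $\cos u=2\sqrt2/3$), so compactness gives a lower bound $cA^2\ge 4c$ with $c$ an absolute positive constant, far exceeding $10^{-10}$; outside the interval the quantity is $A^2$ or $4A^2\ge 4$. The argument is complete and correct.
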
 

\begin{proof}
Calculus exercise for the readers.
\end{proof}

By rescaling the metric by a constant, we make the curvature $K < -1$ (instead of $K < -10^{-10}$). Let $h$ be the rescaled smoothed out metric. Then $(M,h)$ is the desired manifold. 

Now we prove Theorem \ref{strictneg}.
\begin{proof}[Proof of Theorem \ref{strictneg}]
We follow the above outline. First we construct a two-ended manifold with untame ends. Then we construct a manifold with ends not quasi-isometric to a ray.

\begin{itemize}
\item[1)] \textbf{A two-ended manifold with untame ends:} Let $G$ be the graph whose vertices are elements of $\Z$ and edges are intervals $[m,m+1]$ for $m\in \Z$. Let $M$ the manifold obtained from step $1$ above. Let $Y^n$ be a compact hyperbolic manifold of dimension $n \geq 2$. Let $N^{n-1}$ be a compact, totally geodesic submanifold of $Y$. We require $N$ to be non-separating, i.e. $Y \setminus N$ is connected. Let $X$ be the block whose interior is diffeomorphic to $Y \setminus N$. We use the same kind of block $X$ for each vertex of $G$.  We can pick $X_0$ to correspond to the vertex $0 \in \Z$. The gluing maps are the identity map on each boundary component of each block.

\item[2)] \textbf{A manifold with ends not quasi-isometric to a ray:} Let $G$ be the graph whose vertices are elements of $\Z$. Two vertices $u$ and $v$ are connected by an edge $e_{ab}$ if $|a-b| =1$ or $a = b \ne 0$. 

Again, let $Y^n$ be a compact hyperbolic manifold of dimension $n \geq 2$. Let $N^{n-1}$ be a compact, totally geodesic submanifold of $Y$. We require $N$ to have two components this time, one of which is non-separating, and the other is not. There exist such pairs $(Y,N)$ (\cite{Millson},\cite{Lubotzky}). Let $X$ the block whose interior is diffeomorphic to the component $Y \setminus N$ that has three ends. So $X$ has three boundary components $A$, $B$, and $C$. Suppose that $A$ and $B$ correspond to the non-separating component of $N$.   

We use the same kind of block $X$ for each vertex of $G$. We can pick $X_0$ to correspond to the vertex $0 \in \Z$. The boundary components $A$ and $B$ correspond to the edges of the form $e_{m(m+1)}$ for $m \in \Z$, and the boundary component $C$ corresponds to the edges of the form $e_{m(-m)}$ for $m \in \Z$.

We follow the above outline to obtain a manifold $M$ with a singular metric $g$ but this time we choose the truncation of the cusps of type $C$ such that the following holds. Fix a point $x\in X$. For a given metric $g_X$ of $X$, let $l(A)$ be the distance from $A$ to $x$. One can think of $l(A)$ as the ``length" of the cusp $A$. We define $l(B)$ and $l(C)$ similarly. We choose the truncation of the cusps of type $C$ in such a way that for a given block $X$ corresponding to some vertex $m \in Z$, we have $ l(A)+l(B) <l(C) < 2(l(A)+l(B))$. This is not hard to obtain. 

Smooth out the metric $g$ to get a metric $h$. It is not hard to see that $(M,h)$ has two ends, one corresponds to the cusp $C$ of $X_0$ and the other is untame. The untame end of $M$ is not quasi-isometric to a ray.    
\end{itemize}

%As above, let $\{X_i\}_{i\in\Z}$ be a indexed collection of different of copies of $X$. Since $X$ has two boundary components $\partial_AX$ and $\partial_BX$, we label the corresponding boundary components of each $X_i$ by $\partial_AX_i$ and $\partial_BX_i$. For each $i \in \Z$, we glue the $\partial_BX_i$ with $\partial_AX_{i+1}$ by the identity map. Let $M$ be the resulting manifold. Clearly, $M$ does not have a finite topological type, i.e. $M$ is not homeomorphic to the interior of a compact manifold with boundary.

\end{proof}

\subsection{Slow curvature growth: proof of Theorem \ref{growth}} 
Let $f$ be a function as in Theorem \ref{growth}. We use the construction described in the first part of Theorem \ref{strictneg} to obtain a two-ended, finite-volume, negatively curved manifold $M$. We can pick $X_0$ to correspond to the vertex $0 \in \Z$. Let $X_i$ be the block corresponding to the vertex $i \in \Z$. Fix $p \in X_0$. We need to make sure that for each $r >0$, in a ball $B_p(r)$ of radius $r$ centered at $p$ we have $K < f(r)$ for large $r$. We just need to realize that all we need to do is for each $i >0$ (respectively, $i <0$), we need to truncate the cusp of $X_i$ far enough, where ``far enough" depends on the function $f$, down the cusp before gluing $X_{i+1}$ (respectively, $X_{i-1}$).

\section{Finite volume manifolds with infinitely many ends}\label{sec: infinite ends}
Now we give the proof of Theorem \ref{inftyend}.

\begin{proof}[Proof of Theorem \ref{inftyend}]
Let $G$ be a $3$-valent graph. Let $X$ be the block obtained in the second part of the proof of Theorem \ref{strictneg}. Apply the procedure in Section \ref{nonfinitetop}, we get the desired manifold. 
\end{proof}

We remark that there are examples of finite volume manifolds with curvature $-1\leq K\leq 0$ that have infinitely many ends. 
 
\bigskip
\noindent
\textbf{Examples with $-1\leq K(M) \leq 0$ with infinitely many ends.} This example is motivated by an example in \cite{Gromovneg}. Let $X$ be the manifold with boundary $\Sigma^4\times\mathbb{S}^1$, where $\Sigma^4$ is a surface with $4$ boundary components. Each boundary component of $X$ is a torus $\mathbb{S}^1_1\times \mathbb{S}^1_2$, one factor of which is a boundary component of $\Sigma^4$. The other factor corresponds to the $\mathbb{S}^1$ in $\Sigma^4\times\mathbb{S}^1$. 

Glue copies of $X$ together to get a noncompact manifold without boundary. Pairs of boundary components are identified with a flip, i.e. a swap of the $\mathbb{S}^1$ factors. The manifold $M$ obtained has the structure of a graph $G$ of spaces, where $G$ is the Cayley graph of $F_2 = \langle a_1, a_2\rangle$. We do the same trick as in \cite{Gromovneg} to obtain a finite volume manifold with curvature $-1 \leq K \leq 0$, which goes as follows.

It is not hard to show that for each $\varepsilon > 0$, there exists a hyperbolic metric $g_\varepsilon$ on $\Sigma^4$ such that a neighborhood of each boundary component of $\Sigma^4$ is isometric to the product of an interval with a circle of radius $\varepsilon$, and the volume $\Vol (\Sigma^4, g_\varepsilon) < 100$, and the diameter of $(\Sigma^4, g_\varepsilon)$ is greater than $1/10$. 

Let $X_0$ be a based block, corresponding to some based vertex $v_0 \in G$. We are going to give each block of $M$ a metric starting from $X_0$ and moving radially out. Give $X_0$ a metric $g_{X_0}$ that is isometric to the product of some hyperbolic metric on $\Sigma^4$ and $\mathbb{S}^1$ such that the diameter of $\mathbb{S}^1$ is no greater than the diameter of each boundary component of $\Sigma^4$. 

Suppose that we have put a metric $g_v$ on a block $X_v$ corresponding to a vertex $v$ that is a distance $k$ from $v_0$. Let $X_u$ be an adjacent block to $X_v$ that corresponds to a vertex $u$ that is a distance $k+1$ from $v_0$. We give $X_u$ a metric $g_u$ that is isometric to a product metric on $\Sigma^4\times \mathbb{S}^1$ (as above) such that the diameter of $\mathbb{S}^1$ is equal to the diameter of the boundary component of the $\Sigma^4$ factor of $X_v$ that is glued to $X_u$ and vice versa. We also requires $g_u$ to be such that the diameter of all the other boundary components of the $\Sigma^4$ factor of $X_u$ is less than $10^{-k-1}$. 

It is not hard to see that manifold $M$ with this metric is complete, has infinitely many ends, finite volume and nonpositive curvature bounded from below.

\bigskip
\noindent

\section{Curvature growth, volume decay, and topological finiteness}\label{sec:CGVD}

%\subsection{Fundamental lemmas}
 
For a fixed dimension $n$, for each $b > 0$, let 
\[\mu_b = \dfrac{1}{b}\mu_1,\]
where $\mu_1$ is the Margulis constant. (Note that $\mu_b$ is the Margulis constant for the case where $0 > K > -b^2$.) 

It turns out that the Margulis constants depend only on local data. That is, we have the following local version of the Margulis lemma. 
\begin{lemma}[Local Little Loop Lemma]\label{LLLL}
Let $\widetilde{M}$ be a complete, simply connected, negatively curved manifold. Let $p \in \widetilde{M}$, and let $a > 0$. If the sectional curvature of $\widetilde{M}$ in the ball $B_p(10\mu_a)$ centered at $p$ with radius $10\mu_a$ is between $0$ and $-a^2$, then any discrete group of isometries of $\widetilde{M}$ generated by elements that move $p$ a distance less than $\mu_a$ is virtually nilpotent.
\end{lemma}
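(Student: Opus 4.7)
The plan is to adapt the classical proof of the Margulis lemma for pinched curvature (Gromov, Thurston, Ballmann--Gromov--Schroeder) by auditing each comparison argument to confirm it takes place in a bounded neighborhood of $p$. After rescaling the metric by a factor of $a$, we may assume $a = 1$, so $\mu_a = \mu_1$ and the hypothesis reads $-1 \le K \le 0$ on $B_p(10\mu_1)$.

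First I would establish orbit containment. If $\Gamma$ is generated by isometries $g_1, \dots, g_k$ with $d(p, g_i p) < \mu_1$, then any word of length $\ell$ in these generators moves $p$ by at most $\ell\mu_1$. The Margulis-style argument only involves products of bounded length (at most a few), and the auxiliary comparison geodesics are also uniformly short, so the entire geometric picture fits inside $B_p(10\mu_1)$ where the curvature bound holds.

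Second, and most importantly, I would reprove the standard commutator estimate in a purely local form: there is a constant $\kappa_n$ depending only on the dimension such that if $d(p, gp), d(p, hp) < \mu_1$ then
\[
d(p, [g,h]p) \le \kappa_n \cdot d(p, gp) \cdot d(p, hp).
\]
The usual proof compares Jacobi fields along the geodesic segments joining the points $p, gp, hp, ghp, ghg^{-1}p, [g,h]p$ to corresponding geodesics in model spaces of constant curvature $0$ and $-1$. All these segments have length bounded by a small multiple of $\mu_1$, so they lie entirely in $B_p(10\mu_1)$, and the Rauch comparison uses only local curvature data. This is the step I expect to be the main obstacle: one must carefully verify that no comparison geodesic or Jacobi field escapes $B_p(10\mu_1)$, and that no global hypothesis (such as a global upper bound on curvature, or the CAT$(0)$ property of the whole of $\widetilde{M}$) is implicitly invoked.

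Finally, I would run the Zassenhaus iteration. A nested commutator of depth $k$ in the short generators moves $p$ by at most $\mu_1 (\kappa_n \mu_1)^k$, which tends to $0$ provided $\mu_1$ is chosen so that $\kappa_n \mu_1 < 1$ (this is how the Margulis constant is calibrated in the first place). Since discreteness of $\Gamma$ is a local property inside $B_p(\mu_1)$, sufficiently deep nested commutators in the subgroup $\Gamma'$ generated by those elements of $\Gamma$ moving $p$ by less than $\mu_1$ must be trivial, so $\Gamma'$ is nilpotent. A standard volume-packing argument in $B_p(\mu_1)$, again purely local, shows that $\Gamma'$ has finite index in $\Gamma$, yielding virtual nilpotency.
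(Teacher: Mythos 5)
Your proposal is correct and follows essentially the same route as the paper: the paper's proof simply asserts that the classical Margulis lemma argument (citing Buser--Karcher) goes through verbatim because every step --- in the paper's phrasing, the control on parallel transport --- only uses curvature data in a bounded region around $p$, which is exactly the locality audit you carry out. Your version is just a more explicit unpacking of the same steps (short-orbit containment, local commutator estimate, Zassenhaus iteration, volume-packing for finite index), so there is no substantive difference.
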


\begin{proof}
The proof of this is exactly the proof of the Margulis lemma (e.g. see \cite{BuserKarcher}). One needs to check that every step works because what is needed is a control on parallel transport in a large enough region of $\widetilde{M}$, which will follow if the curvature in the region is sufficiently bounded as in the hypothesis of Lemma \ref{LLLL}. 
\end{proof}

For $q \in M$, let $\Gamma_q$ be the subgroup generated by loops based at $q$ with length less than $\mu_{b(q)} := \mu_{b(d(p,q))}$. Then $\Gamma_q$ is virtually nilpotent by Lemma \ref{LLLL}.  

The proof of Theorem \ref{CGVD} is similar to that in \cite{BGS} and will be given at the end of this section. The difference is one needs to keep track of local data. 
\begin{lemma}\label{no small hyperbolic}
There is $R > 0$ such that if $q \in M \setminus B_p(R)$, then $\Gamma_q$ is nontrivial and contains only parabolic isometries.
\end{lemma}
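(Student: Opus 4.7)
The plan is to prove both claims by contradiction, using the volume decay hypothesis of Theorem \ref{CGVD} to rule out each obstruction.

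For nontriviality, I would assume $\Gamma_q$ is trivial for some $q$ with $r = d(p,q)$ large. Then no homotopically nontrivial geodesic loop at $q$ has length below $\mu_{b(q)} = \mu_1/b_p(r)$, so $\mathrm{inj}(q) \geq \mu_{b(q)}/2$. By G\"unther's comparison theorem in nonpositive curvature, the embedded ball $B_q(\mu_{b(q)}/2)$ has volume at least $\omega_n(\mu_{b(q)}/2)^n = c_n/b_p(r)^n$. Since this ball has radius less than $1$, it lies inside $A_p(r-1)\cup A_p(r)\cup A_p(r+1)$, and the monotonicity of $b_p$ lets me transfer the Margulis-constant estimates across neighboring annuli. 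Summing, the total annular volume must exceed $c_n/b_p(r)^n$, which the hypothesis $b_p(r)^n\Vol(A_p(r))^2 \to 0$ forbids for $r$ beyond some $R_0$.

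For ``only parabolic,'' by Lemma \ref{LLLL} (applied at $q$, using that the ball of radius $10\mu_{b(q)}$ about $q$ has curvature bounded in absolute value by $b_p(r+10\mu_{b(q)})$), $\Gamma_q$ is virtually nilpotent. A standard fact in strict negative curvature is that any discrete virtually nilpotent subgroup of $\Isom(\widetilde{M})$ containing a hyperbolic isometry $\gamma$ must be virtually cyclic, with its infinite-order elements preserving the axis of $\gamma$. So if some $\gamma \in \Gamma_q$ were hyperbolic, its axis $A_\gamma$ would descend to a closed geodesic $c_\gamma \subset M$ of length $\ell \leq d(q,\gamma q) < \mu_{b(q)}$. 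Using Jacobi-field comparison in curvature $\geq -b_p(r)^2$, the displacement function satisfies $d_\gamma(x) \geq (\ell/b_p(r))\cosh(b_p(r)\,d(x, A_\gamma))$, so the Margulis-type tube $T = \{x : d_\gamma(x) < \mu_{b(q)}\}$ has radius at least $\rho = b_p(r)^{-1}\mathrm{arccosh}(\mu_{b(q)}/\ell)$ and volume at least $C(n)\,\ell\,\rho^{n-1}$. The tube sits in a bounded number of annuli (since $c_\gamma$ has diameter $\leq \ell/2$ and $\rho$ is controlled), and the decay hypothesis is violated once more.

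The main obstacle is the second part: the tube geometry depends simultaneously on $\ell$, $\mu_{b(q)}$, and $b_p(r)$, so the lower bound on the tube volume must beat the annular bound uniformly over all admissible $\ell \in (0,\mu_{b(q)})$. In particular, for very small $\ell$ the tube becomes very wide, and one must still confirm it is contained in only finitely many annuli. A secondary concern is arranging $R$ in the statement to dominate the constants appearing in the hypothesis of the Local Little Loop Lemma, so that the required curvature control over a ball of radius $10\mu_{b(q)}$ is genuinely available throughout the region under consideration.
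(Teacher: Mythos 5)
Your first half (nontriviality of $\Gamma_q$) is fine and is essentially what the paper asserts: an embedded ball of radius comparable to the local Margulis scale $\mu_{b(q)}$ would, by comparison, carry volume of order $b_p(r)^{-n/2}$ raised to the right power, and the hypothesis $b_p(r)^n\Vol(A_p(r))^2\to 0$ is calibrated exactly to forbid this for large $r$. (Do note the normalization: since $b_p(r)$ bounds $|K|$, the relevant Margulis constant scales like $b_p(r)^{-1/2}$, not $b_p(r)^{-1}$; with your normalization the exponents do not match the hypothesis.)

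The second half has a genuine gap, and it is precisely the one you flag but do not close. First, your comparison inequality runs the wrong way: a \emph{lower} bound $d_\gamma(x)\geq F(d(x,A_\gamma))$ forces the tube $\{d_\gamma<\mu\}$ to be \emph{narrow}, not wide; to bound the tube radius from below you need an \emph{upper} bound on $d_\gamma$, which comes from the lower curvature bound $K\geq -b^2$ -- and that bound must hold throughout the tube, whose radius is not a priori controlled, so the estimate is circular. Second, and fatally, the volume lower bound you propose for the tube, $C(n)\,\ell\,\rho^{n-1}$ with $\rho\sim b^{-1}\log(\mu_{b(q)}/\ell)$, tends to $0$ as $\ell\to 0$. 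The familiar fact that Margulis tubes around short geodesics have large volume uses exponential volume growth transverse to the core, which requires a uniform \emph{upper} curvature bound $K\leq -a^2<0$; Theorem \ref{CGVD} assumes only $K<0$, so you only get the Euclidean (polynomial) comparison, and the contradiction with the annular volume decay evaporates exactly in the regime of very short core geodesics -- which is the regime you must rule out. Moreover the tube then meets on the order of $\rho$ annuli, so even the pigeonhole step degrades. The paper avoids all of this with a soft argument: by Lemma \ref{LLLL} the type of $\Gamma_q$ (hyperbolic versus parabolic) and, in the hyperbolic case, the pair of fixed points at infinity are locally constant in $q$, hence constant on each component $C$ of $M\setminus B_p(R)$; if the type were hyperbolic on $C$, every $\Gamma_y$ for $y\in C$ would translate along one fixed axis, which is impossible for $y$ far from that axis because the displacement function of a hyperbolic isometry in negative curvature tends to infinity with the distance to its axis, while the generators of $\Gamma_y$ move $y$ less than $\mu_{b(y)}\leq\mu_{b(q_0)}$. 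If you want to keep a quantitative tube argument, you would need to inject some additional input (for instance an upper curvature bound, or an argument that short closed geodesics cannot occur at all in the region considered); as written the step fails.
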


\begin{proof}
Since
\[\lim_{r\rightarrow \infty} b(r)^n\Vol(B_p(r) \setminus B_p(r - 1))^2 = 0.\]
there is $R>0$ such that for each $q \in M \setminus B_p(R)$, the injectivity radius at $q$ is less than $\mu_{b(q)}$, so $\Gamma_q$ is nontrivial. Pick $R$ large enough so that 

\[b(r)^n\Vol(B_p(r) \setminus B_p(r - 1))^2 < 10^{-100},\]
and
\[\Vol(B_p(r) \setminus B_p(r - 1))^2  < 10^{-100}.\]

%Suppose that there is a sequence $q_n \rightarrow \infty$ (leaving every compact set) such that for each $n$, the group $\Gamma_{q_n}$ contains a hyperbolic isometry. 

Let $q \in M \setminus B_p(R)$. By Lemma \ref{LLLL}, the group $\Gamma_q$ has a finite index subgroup $\Lambda_q$ that is nilpotent. It follows that if $\Gamma_q$ contains a hyperbolic isometry, then $\Lambda_q \cong \Z$ and contains only hyperbolic isometries. But if  $\Gamma_q$ contains a parabolic isometry, then $\Lambda_q$ contains only parabolic isometries. 

It is not hard to see that for $\varepsilon > 0$ small enough, if $x \in B_q(\varepsilon)$, then $\Gamma_x$ contains the same type of isometries as $\Gamma_q$. Hence we can define a function 
\[f \colon M\setminus B_p(R) \longrightarrow \{\text{hyp},\text{para}\}\] 
defined as $f(x) = \text{hyp}$ (respectively, $f(x) = \text{para}$) if $\Gamma_x$ contains a hyperbolic (respectively, parabolic) isometry. By the previous paragraph, $f$ is locally constant. Hence, $f$ is constant on each connected component of $M \setminus B_p(R)$. 

We claim that for each component $C$ of $M \setminus B_p(R)$, we must have $f(C) = \text{para}$. Suppose this is not true for some $C$. Let $x\in C$. Then $\Gamma_x$ contains some hyperbolic isometry $h$ with axis a geodesic $\gamma(t)$. Then the fixed point set on $\partial_\infty \widetilde{M}$ is $\{\gamma(-\infty), \gamma(\infty)\}$. By a similar argument as above, the function that assigns to each point $x \in C$ the set of fixed points in $\partial_\infty\widetilde{M}$ of $\Gamma_x$ is also locally constant. Hence, for any $y\in C$ and $\Gamma_y = \Gamma_x$. But this is impossible if $y$ is a large enough distance from $x$ since the displacement function $d(z,\gamma(z))$ of a hyperbolic isometry $h$ with axis $\gamma$ goes to $\infty$ as $d(z,\gamma)\rightarrow\infty$.

Therefore, $f(C) = \text{para}$ for all components $C$ of  $M \setminus B_p(R)$, and the lemma follows.

\end{proof}
Now we define a Morse function on $M$ outside some compact set. %Let $\epsilon$ be half of the injectivity radius of $B_p(R)$. First, we define $g \colon [0,\infty)\longrightarrow [0,\infty)$ to be a smooth function with 
%\begin{itemize}
%\item $g(t) >0$ and $g'(t) < 0$ for $0< t < \epsilon$, and
%\item $g(t) =0$ for $t \geq \epsilon$.
%\end{itemize}
%Next we define the following function on $\widetilde{M}$. 
%\[\delta(x) = \sum_{\gamma \in \pi_1(M), \gamma\ne 1} g( d(x,\gamma (x)).\]
%By properness of the action of $\pi_1(M)$ on $\widetilde{M}$, we have $\delta(x)< \infty$ for all $x$. We see that $\delta$ is smooth, $\pi_1(M)$-invariant and thus, descends to a smooth function on $M$.
For a discrete group $\Gamma$ of isometries of $\widetilde{M}$, we define the following displacement function on $\widetilde{M}$ as in \cite{Gromovneg}. 
\[\delta_\Gamma (x) = \min_{\gamma\in\Gamma \setminus\{1\}} d(x,\gamma (x)).\]
We see that $\delta_\Gamma$ is $\Gamma$-invariant and thus, descends to a function on $\widetilde{M}/\Gamma$. In general, the function $\delta_\Gamma$ is not smooth but locally is equal to the minimum of finitely many smooth functions $f_1, f_2, ..., f_k$, namely $f_i (x) = d(x, \gamma_i(x))$ for some deck transformation $\gamma_i \in \pi_1(M)$. A point $x\in M$ is not a critical point if there is a tangent vector $v \in T_xM$ such that $df_i (v) > 0$ for all $i = 1, 2,...,k$. 

\begin{lemma}\label{Morse function}
Let $R$ be as in Lemma \ref{no small hyperbolic}. Then the function $\delta_{\pi_1(M)}$ does not have any critical point in $M\setminus B_p(R)$. 
\end{lemma}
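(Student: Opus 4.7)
The plan is to show that for any lift $x \in \widetilde{M}$ of a point $q \in M \setminus B_p(R)$, there is a tangent vector $v \in T_x \widetilde{M}$ along which every displacement function realizing the minimum $\delta_{\pi_1(M)}(x)$ strictly increases. First I would identify the active functions: let $\gamma_1, \ldots, \gamma_k \in \pi_1(M) \setminus \{1\}$ be the elements with $d(x, \gamma_i(x)) = \delta_{\pi_1(M)}(x)$. Since $\Gamma_q$ is nontrivial by Lemma \ref{no small hyperbolic}, we have $\delta_{\pi_1(M)}(x) \leq \mu_{b(q)}$, so each $\gamma_i$ corresponds to a loop at $q$ of length at most $\mu_{b(q)}$ and therefore lies in $\Gamma_q$. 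By Lemma \ref{no small hyperbolic}, each $\gamma_i$ is parabolic.

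Next I would extract a common point at infinity fixed by all the $\gamma_i$. The Local Little Loop Lemma gives that $\Gamma_q$ is virtually nilpotent; let $\Lambda \leq \Gamma_q$ be a nilpotent finite-index subgroup. A standard propagation argument in strictly negatively curved geometry shows that all elements of $\Lambda$ share a single fixed point $\xi \in \partial_\infty \widetilde{M}$: commuting parabolic isometries share their unique fixed point (if $\gamma\gamma' = \gamma'\gamma$, then $\gamma'(\xi)$ is fixed by $\gamma$, hence equals $\xi$ by uniqueness), and this propagates through the descending central series of $\Lambda$. For any $\gamma \in \Gamma_q$, some power lies in $\Lambda$ and hence fixes $\xi$; uniqueness of the parabolic fixed point for $\gamma^k$ then forces $\gamma(\xi) = \xi$, so all of $\gamma_1, \ldots, \gamma_k$ fix the same $\xi$.

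I would then let $v \in T_x\widetilde{M}$ be the unit tangent vector at $x$ pointing along the geodesic ray from $\xi$ through $x$, in the direction away from $\xi$ (equivalently, $v = -\nabla \beta_\xi(x)$ where $\beta_\xi$ is the Busemann function centered at $\xi$). The core of the argument is the claim that $df_i(v) > 0$ for each $f_i(y) = d(y, \gamma_i(y))$. Each $f_i$ is smooth at $x$ since the parabolic $\gamma_i$ has no fixed point in $\widetilde{M}$, and strictly convex since $\widetilde{M}$ has strictly negative curvature and therefore contains no flat strips. Along the geodesic ray from $x$ to $\xi$, $f_i$ is strictly convex and tends to $0$, while $f_i(x) > 0$; strict convexity forces the directional derivative along that ray to be strictly negative, and in the opposite direction $v$ it is therefore strictly positive. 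Since this holds simultaneously for every active $\gamma_i$, the point $x$ is not critical for $\delta_{\pi_1(M)}$.

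The step I expect to be the main obstacle is the simultaneous-fixed-point claim together with the strict monotonicity of the parabolic displacement function along geodesics through $\xi$: both facts are classical under pinched negative curvature, but here they must be extracted from strict negativity alone, leveraging the local curvature pinching inside the Margulis ball around $x$ that is built into the definition of $\Gamma_q$ and that is guaranteed by the choice of $R$ in Lemma \ref{no small hyperbolic}.
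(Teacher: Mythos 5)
Your proof is correct and follows essentially the same route as the paper's: the minimal displacement at $q$ is realized by parabolic elements of $\Gamma_q$ (Lemma \ref{no small hyperbolic}), and the displacement function of a parabolic strictly increases along the geodesic direction pointing away from its fixed point at infinity. Your write-up is in fact more complete than the paper's two-line argument, which tacitly treats a single realizing element and skips the issue of several active functions needing a simultaneously increasing direction --- the common-fixed-point argument via virtual nilpotence of $\Gamma_q$ that you supply is exactly the missing ingredient (and the increasing direction does point \emph{away} from the fixed point, as you say, rather than toward it as the paper's parenthetical loosely suggests).
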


\begin{proof}
Let $q \in M \setminus B_p(R)$. Since $\delta_{\pi_1(M)}(q)$ is realized by a parabolic isometry $\gamma$, there is a direction of increasing for $\delta_{\pi_1(M)}$ at $q$ (which is pointing the fixed point at infinity of $\gamma$). So $\delta_{\pi_1(M)}$ has no critical points outside $B_p(R)$.
\end{proof}
Now we prove Theorem \ref{CGVD}.
\begin{proof}[Proof of Theorem \ref{CGVD}]
This follows from Lemma \ref{Morse function} and Morse theory. The function $\delta_{\pi_1(M)}$ is not smooth, but Morse theory applies to functions that locally are the minimum of finitely many functions with no critical points as described above. (See also \cite[p. 226]{Gromovneg}.) 

Let $\alpha < \delta_\Gamma(B_p(R))$. Then $\alpha$ is a regular value of $\delta_\Gamma$. There is $R_1 > R$ such that $\delta_\Gamma(M \setminus B_p(R_1)) \leq \beta < \alpha$. The level set of $\delta_\Gamma = \alpha$ is an embedded submanifold of $M$ that is contained in the bounded ball $B_p(R_1)$. Thus, $\delta_\Gamma^{-1}(\alpha)$ is compact. By Morse theory, $M$ is diffeomorphic to the interior of a compact manifold $\overline{M}$ with boundary diffeomorphic to $\delta_\Gamma^{-1}(\alpha)$.
\end{proof}

%\subsection{The boundary is horospherical and $\pi_1$-injective}
%Let $M$ be as in Theorem \ref{CGVD}. Then $M$ is the interior of some compact manifold $\overline{M}$ with boundary. In this section, we show that each component of $\partial\overline{M}$ can naturally be identified with the quotient of a horosphere $S$ of $\widetilde{M}$ by a subgroup of $\pi_1(M)$ containing precisely parabolic deck transformations fixing the center (at infinity) of $S$. Thus, each boundary component of $\overline{M}$ is a compact aspherical manifold.

\section{Negatively curved, invisibility manifolds}\label{sec: invisibility}
\subsection{An invisibility, negatively curved surface} 
We construct a complete, negatively curved metric on $\R^2$ such that the resulting Riemannian manifold fails to satisfy the Visibility axiom. (Compare with \cite[p. 53]{BGS}).

Let $g \colon \R \longrightarrow\R$ be a smooth function that satisfies 
\[ g''(t) > 0, \quad \text{and} \quad g'(\infty) - g'(-\infty) = \dfrac{\pi}{10}.\] 
Let $f\colon \R^2 \longrightarrow\R$ be defined as
\[f(x,y) = g(x) - g(y).\]
Let $\widetilde{M}$ be the graph of $f$ with the induced Riemannian metric from $\R^3$, and let $(x,y)$ be a local parametrization of $\widetilde{M}$. The Gaussian curvature at $(x,y)$ is 
\[\kappa(x,y) = \dfrac{f_{xx}f_{yy} -f_{xy}^2}{(1 + f_x^2 + f_y^2)^2} 
 = \dfrac{-g''(x)g''(y)}{(1+g'(x)^2 + g'(y)^2)^2} < 0. \]
We have the integral of the Gaussian curvature over $\widetilde{M}$ satisfies the following bounds.
\begin{align*}
0 \geq \int_{\widetilde{M}}\kappa(x,y) \geq & -\int_{-\infty}^{\infty}g''(x)dx\times\int_{-\infty}^{\infty}g''(y)dy \\
& = -(g'(\infty) - g'(-\infty))^2 \\
& = \left(\dfrac{\pi}{10}\right)^2 = -\dfrac{\pi^2}{100}.
\end{align*}
\begin{theorem}
The above $\widetilde{M}$ with the induced Riemannian metric from $\R^3$ is a negatively curved, invisibility manifold. 
\end{theorem}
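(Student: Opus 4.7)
The plan is to establish that $\widetilde{M}$ is a Hadamard surface and then derive the failure of the visibility axiom from the total curvature estimate $\int_{\widetilde{M}}\kappa\,dA \geq -\pi^{2}/100$ by applying Gauss--Bonnet to a sequence of geodesic triangles whose vertices escape to three distinct points of the visual boundary. First I would verify the Hadamard hypothesis: $\widetilde{M}$ is diffeomorphic to $\R^{2}$ via projection and is therefore simply connected; the induced metric majorizes the pullback of the Euclidean metric on $\R^{2}$, so every Cauchy sequence in $\widetilde{M}$ projects to a convergent sequence in $\R^{2}$ and completeness follows from continuity of $f(x,y)=g(x)-g(y)$; and $\kappa<0$ has already been computed. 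Consequently $\partial_{\infty}\widetilde{M}$ is homeomorphic to $S^{1}$ and contains infinitely many points.

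Now suppose for contradiction that $\widetilde{M}$ satisfies the visibility axiom. Choose three distinct $\xi_{1},\xi_{2},\xi_{3}\in\partial_{\infty}\widetilde{M}$, a base point $p$, and unit vectors $v_{i}\in T_{p}^{1}\widetilde{M}$ with $\exp_{p}(tv_{i})\to \xi_{i}$. Setting $q_{i}^{n}:=\exp_{p}(nv_{i})$, the geodesic triangle $T_{n}$ with vertices $q_{1}^{n},q_{2}^{n},q_{3}^{n}$ is, by uniqueness of geodesic segments in a Hadamard manifold, an embedded topological disk bounded by three geodesic arcs, so Gauss--Bonnet yields
\[
\int_{T_{n}}\kappa\,dA \;=\; -\pi+\alpha_{1}^{n}+\alpha_{2}^{n}+\alpha_{3}^{n},
\]
where $\alpha_{i}^{n}$ is the interior angle at $q_{i}^{n}$. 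The heart of the argument is to show that visibility forces $\alpha_{i}^{n}\to 0$. The geometric picture is that as $q_{i}^{n}$ marches out along the ray $p\to \xi_{i}$, the two sides of $T_{n}$ incident to $q_{i}^{n}$ converge to the bi-infinite geodesics joining $\xi_{i}$ to $\xi_{j}$ and to $\xi_{k}$ supplied by visibility; at a point deep in the direction $\xi_{i}$ the initial tangent of each of these geodesics must point essentially back toward $p$, so both sides of $T_{n}$ at $q_{i}^{n}$ leave in nearly the same direction. Granting this, $\int_{T_{n}}\kappa\,dA\to -\pi$, while $T_{n}\subset\widetilde{M}$ and $\kappa<0$ give the a priori lower bound
\[
\int_{T_{n}}\kappa\,dA \;\geq\; \int_{\widetilde{M}}\kappa\,dA \;\geq\; -\frac{\pi^{2}}{100} \;>\; -\pi,
\]
a contradiction, whence the visibility axiom fails.

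The main obstacle is the formal proof that $\alpha_{i}^{n}\to 0$. I would extract it from the angle-distance characterization of visibility together with continuous dependence of geodesic segments on their endpoints in a Hadamard manifold, arguing by contrapositive: if the angle at $q_{i}^{n}$ stayed bounded below, then one of the sides $q_{i}^{n}q_{j}^{n}$ would have to keep at bounded distance from $\xi_{i}$, in conflict with $q_{j}^{n}\to \xi_{j}\neq \xi_{i}$. This is the standard statement that ideal triangles in a visibility Hadamard surface have zero angle at each ideal vertex. Everything else is a direct application of Gauss--Bonnet and the explicit integral bound already proved in the excerpt.
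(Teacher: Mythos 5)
Your proposal is correct and follows the same strategy as the paper: apply Gauss--Bonnet to a large geodesic triangle and contradict the total curvature bound $\int_{\widetilde{M}}\kappa \geq -\pi^2/100$. The only difference is cosmetic --- you use three ideal vertices and a limiting sequence of finite triangles, whereas the paper uses a single ideal triangle with finite apex $p$ and prescribed angle $\pi/100$ --- and the ``main obstacle'' you correctly isolate (that visibility forces the angles at the escaping vertices to vanish) is precisely the fact the paper absorbs, without comment, into its formula $\int_{\triangle}\kappa = \alpha - \pi$ for the ideal triangle.
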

\begin{proof}
Suppose that $\widetilde{M}$ is a visibility manifold. Let $p \in M$ and let $x, y \in\partial_{\infty}\widetilde{M}$ such that the angle $\alpha$ between the geodesic rays $px$ and $py$ is $\dfrac{\pi}{100}$. By assumption, there is a geodesic $\lambda$ connecting $x$ and $y$. Let $\triangle$ be the triangle with vertices $p,x,y$. Then by the Gauss-Bonnet theorem, we have
\[\int_{\triangle}\kappa = \alpha - \pi = \dfrac{\pi}{100} -\pi. \]
But 
\[ \int_{\triangle}\kappa \geq \int_{\widetilde{M}}\kappa \geq  -\dfrac{\pi^2}{100}, \]
which is a contradiction. Hence, there is no geodesic connecting $x$ and $y$, i.e. $\widetilde{M}$ is not a visibility manifold.
\end{proof}

\subsection{Proof of Theorem \ref{nonvisibility lattice}}

We will prove Theorem \ref{nonvisibility lattice} for the case where $M$ has dimension $n = 3$ since the proof involves some writing things out in coordinates. The general case can be reduced to this case.

Let $N$ be a compact hyperbolic manifold of dimension $n$. Let $H$ be a connected, totally geodesic, embedded, codimension $2$ submanifold of $N$. Such pairs $N$ and $H$ can be constructed arithmetically. Since we assume that $n = 3$, the submanifold $H$ is a geodesic loop. Let $h$ be the length of the geodesic loop $H$. The hyperbolic metric of $N$ on a tubular neighborhood $T_\varepsilon H$ of $H$ is 
\[ g_{hyp} = \cosh^2(r) du^2 + \sinh^2(r) d\theta^2 + dr^2.\] 
Let $M = N\setminus H$. Let $g$ be the metric constructed in \cite{Fujiwara}. We have
\[ g =  \cosh^2(r) du^2 + \sinh^2(r) d\theta^2 + f(r)^2dr^2,\] 
where $f(r)$ is convex, $f(r) \rightarrow \infty$ as $r \rightarrow 0$, and some other properties (in \cite{Fujiwara}).

The submanifold $\theta = \text{const}$ of $T_\varepsilon H$ is totally geodesic since the metric $g$ is invariant under reflection across $\theta = \text{const}$ fixing $r$ and $u$. Let $S$ be such a submanifold $\theta = a$ for some constant $a$. It is not hard to show that $S$ is isometric to a surface of revolution obtained by rotating the graph of a function $x = \varphi(z)$ around the $z$-axis in $\R^3$. We can pick $\varphi(z)$ to be defined on $(1,\infty)$. The function $\varphi (z)$ must 
satisfy $\varphi(z) \rightarrow h$ as $z \rightarrow \infty$. 

We identify $S$ with the surface of revolution generated by $\varphi$. Let $\gamma (t)$, for $t \in [0,\infty)$ be a geodesic ray on $S$. %It suffices to prove that the universal cover of $S$ does not satisfy the visibility axiom.
%\begin{lemma}
%Let $\varphi \colon (1, \infty) \longrightarrow \R^+$ be a strictly convex function. Assume that $\varphi(z) \rightarrow h > 0$ as $z \rightarrow \infty$. Let $S$ be a surface of revolution obtained by rotating the graph of a function $x = \varphi(z)$ around the $z$-axis in $\R^3$. Then there are geodesic rays $\gamma_1$ and $\gamma_2$ starting at some point $p \in S$ such that 
%\end{lemma}
%\begin{proof}
For each $q \in S$, let $\rho(q)$ be the Euclidean distance between $q$ and the $z$-axis.  For each $t$, let $\alpha (\gamma(t))$ be the angle between the tangent vector $\gamma' (t)$ and the meridian of $S$. By the conservation of angular momentum we have the quantity $\rho (\gamma(t)).\sin(\alpha (\gamma(t))) = \text{const}$ along $\gamma(t)$ (see \cite[p. 46]{Arnold}).

Fix $p \in S$. Let $\rho_0$ be the Euclidean distance between $p$ and the $z$-axis. Let $\alpha_0 > 0$ be such that $\rho_0\sin(\alpha_0) < h$. Let $\gamma_1$ (respectively $\gamma_2$) be the geodesic ray starting at $p$ with initial tangent vector $\gamma'(0)$ at angle $\alpha_0$ (respectively $-\alpha_0$) with the meridian. 

Consider $\gamma_1$. For each $t \in [0, \infty)$, we have 
\[\rho(\gamma_1(t))\cdot\sin(\alpha(\gamma_1(t))) = \rho(\gamma_1(0))\cdot\sin(\alpha(\gamma_1(0))) = \rho_0\cdot\sin(\alpha_0).\] 
Since $\rho_0\sin(\alpha_0) < h$ and $\rho(\gamma_1(t)) > h$ for all $t \in [0,\infty)$, it follows that $\alpha(\gamma_1(t)) < \pi/2 - \epsilon$ for some $\epsilon>0$. Therefore, $\gamma_1[0,\infty)$ is contained in $S$. Similarly, $\gamma_2[0,\infty)$ is contained in $S$.  

Let $\widetilde{p}$ be a lift of $p$ in the universal cover $\widetilde{M}$. Let $\widetilde{\gamma}_i$ (for $i = 1, 2$) be lifts of $\gamma_i$ starting at $\widetilde{p}$. Then $\widetilde{\gamma}_1(\infty)$ and $\widetilde{\gamma}_2(\infty)$ are distinct points on $\partial_\infty \widetilde{M}$ since the angle $\angle_{\widetilde{p}}(\widetilde{\gamma}_2'(0), \widetilde{\gamma}_1'(0)) = 2\alpha_0 \ne 0$. We claim that $\widetilde{\gamma}_1(\infty)$ and $\widetilde{\gamma}_2(\infty)$ cannot be connected by a geodesic in $\widetilde{M}$. 

Suppose that there is a geodesic $\widetilde{\lambda} \in \widetilde{M}$ connecting $\widetilde{\gamma}_1(\infty)$ and $\widetilde{\gamma}_2(\infty)$. Then given sequences $a_n \rightarrow \infty$ and $b_n \rightarrow \infty$, the geodesic segments $\widetilde{\lambda}_n$ connecting $\widetilde{\gamma}_1(a_n)$ and $\widetilde{\gamma}_2(b_n)$ must converge to $\widetilde{\lambda}$. We will show that this is not possible by showing that such $\widetilde{\lambda}_n$ must leave all compact sets in $\widetilde{M}$.   

Let $\lambda_n$ be the image of $\widetilde{\lambda}_n$ under the covering space projection. Since $\gamma_1(a_n) \in S$, $\gamma_2(b_n)$ and $S$ is totally geodesic in $M$, it follows that $\lambda_n \subset S$. As shown above, for $i = 1,2$, we have $|\alpha(\gamma_i(t))| < \pi/2 - \epsilon$ for some $\epsilon >0$ , so the $z$-coordinate $z(\gamma_i(t)) \rightarrow \infty$ as $t\rightarrow\infty$. Therefore $z(\gamma_1(a_n)) \rightarrow \infty$ and $z(\gamma_2(b_n)) \rightarrow \infty$. Since width of $S$ is strictly decreasing with respect to $z$, it follows that for any point $q\in\lambda_n$, we have $z(q) \geq \min\{\gamma_1(a_n),\gamma_2(b_n)\}$. Thus $\lambda_n$ leaves every compact sets of $M$. Therefore no such $\widetilde{\lambda}$ exists. Hence $\widetilde{M}$ does not satisfy the visibility axiom and we are done with proving Theorem \ref{nonvisibility lattice} for the case $n = 3$.

Now we address the case where $M$ has dimension $n >3$. As before, $M = N\setminus H$. Let $W$ be a geodesic loop in $H$. Let $V$ be an normal $\varepsilon$-neighborhood of $W$. Then $V$ is a totally geodesic $3$-dimensional submanifold in $M$. The above argument applies to $V$ and gives that $\widetilde{M}$ is not a visibility manifold. So we are done with proving Theorem \ref{nonvisibility lattice}.
%\end{proof}  
\bibliographystyle{amsplain}
\bibliography{bibliography}

\end{document}